\newcommand{\R}{\mathbb{R}}
\newcommand{\C}{\mathbb{C}}
\newcommand{\h}{\mathbb{H}}
\newcommand{\HP}{\mathbb{HP}}
\newcommand{\cp}{\mathbb{CP}}
\newcommand{\Z}{\mathbb{Z}}
\newcommand{\U}{\mathcal{U}}
\newcommand{\g}{\mathfrak{g}}
\newcommand{\s}{\mathcal{S}}
\newcommand{\mf}[1]{\mathfrak{#1}}
\newcommand{\mc}[1]{\mathcal{#1}}
\newcommand{\mb}[1]{\mathbf{#1}}
\newcommand{\tn}[1]{\textnormal{#1}}
\newcommand{\ms}{M_\s}
\newcommand{\mrs}{M_{\mc{R},\s}}
\newcommand{\bs}{\boldsymbol}
\newtheorem{T}{Theorem}[section]
\newtheorem{TP}[T]{Proposition}
\newtheorem{TL}[T]{Lemma}
\newtheorem{TC}[T]{Corollary}
\theoremstyle{definition} 
\theoremstyle{definition}  \newtheorem{TR}[T]{Remark}
\theoremstyle{definition}  \newtheorem{TD}[T]{Definition}
\theoremstyle{definition}  \newtheorem*{Acknowledgements}{Acknowledgements}
\theoremstyle{plain} \newtheorem*{TA}{Theorem A}
\theoremstyle{plain} \newtheorem*{TB}{Theorem B}
\begin{document}
\title{Scalar-flat K\"ahler orbifolds via quaternionic-complex reduction}
\author{Dominic Wright}
\email{dominic.wright@imperial.ac.uk}
\begin{abstract}
We prove that any asymptotically locally Euclidean scalar-flat K\"ahler 4-orbifold whose isometry group contains a 2-torus is isometric, up to an orbifold covering, to a \emph{quaternionic-complex quotient} of a $k$-dimensional quaternionic vector space by a $(k-1)$-torus. In order to do so, we first prove that any compact anti-self-dual 4-orbifold with positive Euler characteristic whose isometry group contains a 2-torus is conformally equivalent, up to an orbifold covering, to a \emph{quaternionic quotient} of $k$-dimensional quaternionic projective space by a $(k-1)$-torus.
\end{abstract}
\maketitle
\section{Introduction}\label{Introduction}
The hyperK\"ahler quotient construction (Hitchin et al. \cite{HKLR87}) has been successively generalized to the quaternionic-K\"ahler quotient construction (Galicki-Lawson \cite{GL88}) and to the quaternionic quotient construction (Joyce \cite{J91}). In four dimensions a quaternionic structure is defined to be an anti-self-dual conformal structure. This enabled Joyce, in \cite{J92}, to use the quaternionic quotient to glue copies of the weighted complex projective plane, thus constructing anti-self-dual metrics on $\overline{\cp}^2\#\ldots\#\overline{\cp}^2$. This preprint led Joyce to an explicit construction of these metrics in \cite{J95}. It was then shown by Fujiki \cite{F00} that these were the only examples of torus symmetric anti-self-dual metrics on compact 4-manifolds with positive orbifold Euler characteristic. The construction of these metrics naturally generalizes to orbifolds and the extension of Fujiki's classification to this setting was confirmed in Theorem A of \cite{W09a}. In this article we return to Joyce's starting point, the quaternionic quotient, in order to prove the following theorem.

\begin{TA}\label{Theorem: Theorem A}
Let $(M,[g])$ be a compact anti-self-dual 4-orbifold with positive orbifold Euler characteristic whose isometry group contains a 2-torus. Then there is a conformal isometry between the universal cover of $(M,[g])$ and a quaternionic quotient of $\HP^{k}$ by a $(k-1)$-dimensional subtorus of the maximal torus in $Sp(1).Sp(k)$, where $k - 1 = b_2(M)$.
\end{TA}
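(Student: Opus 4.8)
The plan is to pass to the twistor space and to match the resulting torus-symmetric complex geometry against that of the quaternionic quotient, recovering the conformal isometry from the inverse twistor construction. Write $Z \to M$ for the twistor space of $(M,[g])$: since $[g]$ is anti-self-dual, $Z$ is a compact complex $3$-orbifold carrying a fixed-point-free antiholomorphic involution $\sigma$ (the real structure) whose $\sigma$-invariant rational curves are the twistor lines, and $[g]$ is reconstructed from $(Z,\sigma)$ by the Penrose correspondence. The isometric $T^2$-action lifts to a holomorphic action on $Z$ commuting with $\sigma$ and preserving the fibration, and this complexifies to a holomorphic $(\C^*)^2$-action on $Z$. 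It therefore suffices to produce, on the universal orbifold cover, a $\sigma$-equivariant biholomorphism between $Z$ and the twistor space of the asserted quaternionic quotient, compatible with the two torus actions; the conformal isometry then follows from the rigidity of the twistor correspondence.

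Next I would extract the combinatorial data classifying $(Z,\sigma)$ together with its $(\C^*)^2$-action. Following Fujiki's analysis in the manifold case and its orbifold extension in \cite{W09a}, positivity of the orbifold Euler characteristic forces the $T^2$-action on $M$ to have only isolated fixed points and finitely many one-dimensional orbits with finite isotropy, so that the quotient $M/T^2$ is a disk whose boundary carries a cyclically ordered family of edges labelled by isotropy data, namely weight vectors in the lattice of $T^2$. On the twistor side these data are encoded by the $\sigma$-invariant divisors and the degenerate twistor lines of the $(\C^*)^2$-action, and by \cite{W09a} they determine $(M,[g])$ up to orbifold covering. The number of edges is governed by the second Betti number, which is what fixes $k-1 = b_2(M)$.

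On the quaternionic side I would compute the twistor space of the quotient directly. The twistor space of $\HP^{k}$ is $\cp^{2k+1}$ with its standard real structure, and a subtorus $T^{k-1}$ of the maximal torus of $Sp(1).Sp(k)$ acts linearly; Joyce's quaternionic moment map gives a section whose zero locus, reduced by the complexified action $(\C^*)^{k-1}$, is a complex $3$-orbifold $Z'$ of the expected dimension $2k+1-(k-1)-(k-1)=3$, inheriting both the real structure and a residual $(\C^*)^2$-action from the $(k+1)$-dimensional maximal torus modulo $T^{k-1}$. The invariant divisors of $Z'$ and their intersection pattern can be read off from the weights of the ambient linear action, producing exactly the same type of cyclic edge-and-weight data as in the previous step. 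Choosing the sublattice defining $T^{k-1}$ so that these weights coincide with the data of $(M,[g])$ should realize every configuration arising from the classification.

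The final step is the identification: having arranged matching combinatorial data, I would upgrade the equality of invariants to a $\sigma$-equivariant biholomorphism $Z \cong Z'$ respecting the $(\C^*)^2$-actions, and then invoke the inverse twistor construction to conclude that the universal cover of $(M,[g])$ is conformally isometric to the quaternionic quotient of $\HP^{k}$ by $T^{k-1}$. I expect the main obstacle to lie in the third step: verifying that the reduced space $Z'$ really is smooth as an orbifold with the predicted invariant divisors, which requires controlling the stability locus of the GIT-type quotient and matching the orbifold isotropy groups with the fixed-point structure of the torus action on $\cp^{2k+1}$, and, above all, proving exhaustion, namely that every admissible tuple of weight vectors furnished by \cite{W09a} is realized by an honest sublattice of the maximal torus. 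Reconciling the count of weights with $b_2(M)$ and tracking the passage to the universal cover is a secondary bookkeeping point that I would settle alongside the matching.
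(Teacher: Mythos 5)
There is a genuine gap at the heart of your matching step: you treat the discrete edge-and-weight data (the cyclically ordered lattice vectors attached to $\partial(M/T^2)$) as a complete invariant of $(M,[g])$ up to covering, but the classification of \cite{W09a} invoked by the paper determines the conformal class by this combinatorial data $\s$ \emph{together with a continuous modulus}: the positions $\mc{R}=\{\zeta_1,\ldots,\zeta_k\}$ of the torus-fixed orbits on the boundary circle of the orbit space, equivalently the points $z_1,\ldots,z_k$ appearing in the meromorphic function $\psi(z)=\prod_i\big(\frac{z+z_i}{z-z_i}\big)^{v_i}$ of Lemma \ref{Lemma: Meromorphic classification}. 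A fixed toric orbifold $\ms$ carries a $(k-3)$-dimensional family of invariant anti-self-dual conformal structures, so choosing the sublattice defining your $T^{k-1}$ can only pin down $\s$, never $\mc{R}$; your proposed "upgrade the equality of invariants to a $\sigma$-equivariant biholomorphism" fails because the invariants you match are not complete. The paper's essential device, absent from your outline, is Joyce's \emph{weakened} notion of quaternionic moment map: for the fixed torus $G_\s$ there is a family of admissible moment maps $\mu_\mc{R}=B^*.\nu$, parameterized by the choice of the $2$-plane $\mathbb{W}=\langle\tn{Re}(\bs z),\tn{Im}(\bs z)\rangle\subset(\R^k)^*$ killed by $B^*$, and it is exactly this choice that encodes $\mc{R}$ and sweeps out the whole conformal moduli. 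With the standard quaternionic-K\"ahler moment map (the one your "Joyce's quaternionic moment map gives a section" step implicitly defaults to) the reduction yields only the self-dual Einstein members of each family, as in \cite{GL88}, \cite{BGMR98}, \cite{CS06} --- strictly fewer conformal classes than Theorem A requires.

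A consequence of this freedom is that the obstacle you flag as primary is not the real crux, and the real crux is one you do not anticipate. Exhaustion of the admissible weight configurations by sublattices is immediate from the linear map $\Omega:e_i\mapsto v_i$ of (\ref{Equation: Quotient map}), whose kernel defines $G_\s$; what is genuinely delicate is that the \emph{nonstandard} moment map $\mu_\mc{R}$ satisfies the transversality condition (\ref{Equation: Transversality}) on its zero set --- in the quaternionic-K\"ahler case this is automatic because complexified orbits meet level sets orthogonally, but here it must be proved, and Lemma \ref{Lemma: Transversality} reduces it to the non-degeneracy (\ref{Equation: Boundary non-degenacy}) of the Joyce-equation solution $\frac{1}{2}\sum_i f^{p_i}\otimes v_i$, which holds precisely because $\s$ is convex. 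Finally, the paper's identification step is not a matching of invariant divisors but a direct computation of the function $\Psi$ on a distinguished twistor line in the quotient (Lemma \ref{Lemma: Chi}), which descends to the $\psi$ of Lemma \ref{Lemma: Meromorphic classification} with the correct exponents $v_i$ \emph{and} the correct poles $\pm z_i$; your divisor-based matching could plausibly be made to work, but only after you incorporate the continuous parameter above, since otherwise there is no mechanism in your construction by which the positions $z_i$ could come out right.
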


Theorem A can be viewed as a generalization of Theorem A in \cite{CS06} and our proof, rather than extending Joyce's gluing technique, is instead based upon the construction of self-dual Einstein orbifolds via quaternionic-K\"ahler reduction, cf. \cite{BGMR98}, \cite{CS06} and \cite{GL88}. The significant difference between our proof and the quaternionic-K\"ahler approach is that we have a less restrictive definition of the moment map. As a corollary of the construction the twistor space of an orbifold satisfying Theorem A can be realized as the GIT quotient of the intersection of $k-2$ quadrics in $\cp^{2k-1}$ by a complex $(k-2)$-torus. It follows that these twistor spaces are \emph{Moishezon}.

The hyperK\"ahler quotient was used by Kronheimer in \cite{K89a} and \cite{K89b} to construct and classify asymptotically locally Euclidean (ALE) hyperK\"ahler 4-manifolds. We will be interested in the more general case of ALE K\"ahler 4-manifolds with \emph{zero scalar curvature} rather than zero Ricci curvature. In the scalar-flat K\"ahler case we can no longer choose three complex structures globally. However, they do exist locally; more precisely, there is a quaternionic structure. There is also a distinguished choice of complex structure coming from the K\"ahler structure. This provides a 4-dimensional example of what is more generally referred to as a \emph{quaternionic-complex structure}: a quaternionic structure that contains a globally defined complex structure. In \cite{J91} Joyce provides a quotient construction for quaternionic-complex manifolds. We use this, together with Theorem A, to prove the following:
\begin{TB}\label{Theorem: Theorem B}
Let $X$ be an ALE scalar-flat K\"ahler 4-orbifold whose isometry group contains a 2-torus. Then the universal cover of $X$ is isometric to a quaternionic-complex quotient of $\h^{k}$ by a $(k-1)$-dimensional subtorus of the maximal torus in $Sp(1).Sp(k)$, where $k-1 = b_2(X)$.
\end{TB}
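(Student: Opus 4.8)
The plan is to derive Theorem B from Theorem A by conformally compactifying the ALE end, applying the classification of compact anti-self-dual orbifolds, and then upgrading the resulting conformal identification to an isometry by pinning down the scalar-flat K\"ahler representative of the conformal class.

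First I would observe that the hypotheses of Theorem A can be arranged on a compactification of $X$. A scalar-flat K\"ahler surface is anti-self-dual: with the complex orientation the K\"ahler form $\omega$ is self-dual, and for a K\"ahler metric $W^+$ is a universal multiple of the scalar curvature, so scalar-flatness forces $W^+=0$. Hence $[g]$ is an anti-self-dual conformal class and $(X,[g])$ together with the K\"ahler complex structure $J$ is a four-dimensional quaternionic-complex orbifold. Being ALE, the end of $X$ is asymptotic to a flat cone $\R^4/\Gamma$ with $\Gamma\subset U(2)$ acting freely on $S^3$; the conformal class extends smoothly across the cone point, so adjoining one orbifold point with isotropy $\Gamma$ yields a compact anti-self-dual $4$-orbifold $\bar X$. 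I would then verify the remaining hypotheses of Theorem A for $\bar X$: the $2$-torus of isometries of $X$ preserves the end and fixes the cone point, hence extends to a $2$-torus of conformal isometries of $\bar X$; the link $S^3/\Gamma$ is a rational homology sphere, so that $b_2(\bar X)=b_2(X)$; and a Gauss--Bonnet computation gives $\chi^{orb}(\bar X)>0$.

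Applying Theorem A to $\bar X$ then produces a conformal isometry between its universal cover and a quaternionic quotient of $\HP^{k}$ by a $(k-1)$-torus in $Sp(1).Sp(k)$, with $k-1=b_2(\bar X)=b_2(X)$. The cone point of $\bar X$ corresponds, under this identification, to the image of the hyperplane at infinity $\HP^{k-1}\subset\HP^{k}$, whose complement is the affine space $\h^{k}$; restricting the reduction to this affine chart realizes the decompactified space as Joyce's quaternionic-complex quotient of $\h^{k}$, which carries the global complex structure descending from the standard one on $\h^{k}=\C^{2k}$. Tracking the orbifold covering furnished by Theorem A and deleting the points lying over infinity, I would identify the universal cover of $X$ conformally with this quaternionic-complex quotient.

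The main obstacle is to match the complex structures and thereby upgrade the conformal identification to an isometry. The affine quotient carries its own scalar-flat K\"ahler metric, and I must show that its complex structure corresponds under the conformal isometry to $J$; this is the genuinely delicate point, since Theorem A sees only the anti-self-dual conformal class, and one needs the $T^2$-invariance together with the twistor/divisor description of compatible complex structures to single out the correct one. Granting this match, the isometry follows from an elementary rigidity: if two K\"ahler metrics $g$ and $g'=f^2g$ of a complex surface lie in the same conformal class and induce the same complex structure, then $\omega'=f^2\omega$ and $d\omega=d\omega'=0$ give $d(f^2)\wedge\omega=0$, so, $\omega$ being symplectic and $\cdot\wedge\omega$ thus injective on $1$-forms in dimension four, $f$ is constant. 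The common ALE normalization at infinity forces $f=1$, so the scalar-flat K\"ahler metric on $X$ agrees with that of the quaternionic-complex quotient, completing the proof.
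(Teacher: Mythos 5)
Your outline inverts the paper's logic: you compactify $X$ and then apply Theorem A, whereas the paper never compactifies $X$ at all --- it quotes Theorem B of \cite{W09a}, which already supplies an $F$-equivariant isometry between $X$ and the scalar-flat K\"ahler representative on $Y = \mrs - \{x_1\}$, and the new content is to realize that representative as a quaternionic-complex quotient. The genuine gap in your proposal is the step you yourself flag and then grant: matching the K\"ahler complex structure $J$ of $X$ with the complex structure of the affine quotient. This is not a finishing touch but the substantive content of the theorem, and the paper has a concrete mechanism for it that your outline lacks: $F$-invariant scalar-flat K\"ahler representatives on $Y$ correspond to $T^k$-invariant twistor functions non-vanishing over $\{q_1 \neq 0\}$; the space of $T^k$-invariant twistor functions on $\HP^{k-1}$ is spanned by the $\nu_m$ of (\ref{Equation: Twistors}); of these only $\nu_1$ is non-vanishing away from $\{q_1 = 0\}$; and $\nu_1$ is not in the span of the components of $\mu_{\mc{R}}$, so Corollary \ref{Corollary: Quaternionic-complex quotient} applies and the conformal class on $Y$ has a \emph{unique} $F$-invariant scalar-flat K\"ahler representative. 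Without this uniqueness you cannot exclude that your conformal identification carries $J$ to some other compatible complex structure. Worse, you assert without argument that the quotient ``carries the global complex structure descending from the standard one on $\h^{k}$'': by the remark following Corollary \ref{Corollary: Quaternionic-complex quotient}, descent fails precisely when the relevant twistor function lies in the span of the moment map components, so this is a hypothesis that must be verified, and verifying it is exactly the paper's $\nu_1$ computation. Your concluding rigidity argument ($d(f^2)\wedge\omega = 0$ forces $f$ constant in four dimensions, and the ALE normalization forces $f=1$) is correct, but it sits downstream of the missing step.

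A second, independent gap is the compactification itself. That the conformal class of an ALE scalar-flat K\"ahler orbifold of order $l > 3/2$ extends smoothly (as an orbifold conformal structure) across the added point, with the torus action extending, is a nontrivial analytic statement, and it is precisely the kind of work the paper delegates to \cite{W09a}; by rebuilding the identification of $X$ with a toric model from a compactification you would in effect be reproving that classification theorem rather than using it. Finally, a small but real error: Gauss--Bonnet does not yield $\chi_{orb}(\bar X) > 0$, since for an anti-self-dual metric the Gauss--Bonnet integrand (involving $|W^-|^2$, $s^2$ and the trace-free Ricci term with a negative sign) is not pointwise nonnegative. The correct argument, used in Subsection \ref{Subsection: Quaternionic quotient}, is that $\chi_{orb} > 0$ is equivalent to the existence of $F$-fixed points by the Poincar\'e--Hopf theorem for orbifolds, and the added point at infinity is such a fixed point.
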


In the smooth case, surfaces satisfying Theorem B arise naturally when considering extremal K\"ahler metrics, such as in \cite{CLW07} and \cite{RS05}. By Corollary 1.1 of \cite{W09a} these surfaces belong to the class constructed by Calderbank and Singer in \cite{CS04} using techniques developed by Joyce in \cite{J95}. It was noted in \cite{CS04} that these surfaces are diffeomorphic to toric resolutions of $\C^2/\Gamma$, for some cyclic $\Gamma\subset U(2)$. Analogously, the ALE hyperK\"ahler 4-manifolds constructed by Kronheimer are diffeomorphic to minimal resolutions of $\C^2/\Gamma$, where $\Gamma$ is a finite subgroup of $SU(2)$. Furthermore, on the minimal resolutions of the $A_k$-singularities, which correspond to the cyclic subgroups in $SU(2)$, the quaternionic-complex and hyperK\"ahler quotient constructions coincide.

This article is arranged as follows. In Section \ref{Section: Definitions} we first review quaternionic geometry and the quaternionic quotient. Then, we give a description of compact toric 4-orbifolds in terms of combinatorial data and finally, we outline the classification of orbifolds satisfying Theorem A in terms of this combinatorial data and an invariant of the conformal structure.

In Section \ref{Section: Quotient} we construct a family of toric anti-self-dual orbifolds using the quaternionic quotient, which generalizes the construction of self-dual Einstein orbifolds using the quaternionic-K\"ahler quotient \cite{BGMR98}. To do this we first define a $(k-1)$-torus action on $\HP^{k}$ using the combinatorial data associated with a compact toric orbifold. This torus action determines a family of moment maps and, in order to take the quotient, we choose a subset of these moment maps for which the zero sets intersect the complexified torus orbits transversally.

The classification in \cite{W09a} determines an anti-self-dual orbifold in terms of meromorphic functions associated with its twistor space. We use these meromorphic functions in Section \ref{Section: Proofs of Theorem A and B}, to show that the moment map can be chosen so that the quotient is conformally equivalent to any anti-self-dual structure on the underlying compact toric orbifold. This completes the proof of Theorem A, and Theorem B readily follows. To conclude, we make some remarks about deformations, within the framework of the quaternionic quotient, of the orbifolds we have constructed.

\begin{Acknowledgements}
This work forms part of the author's PhD thesis at Imperial College London, funded by the Engineering and Physical Sciences Research council. I would like to thank my thesis advisor Simon Donaldson for helpful discussions during the preparation of this article.
\end{Acknowledgements}

\section{Preliminary material}\label{Section: Definitions}
\subsection{Hypercomplex and quaternionic structures}\label{Subsection: Hypercomplex and quaternionic}
Let $M$ be a smooth $4k$-dimensional manifold. A \emph{hypercomplex structure} on $M$ consists of \emph{three complex structures} $I_1,I_2,I_3\in \tn{End}(TM)$ that satisfy the defining relations of a quaternionic algebra:
$$I_1I_2I_3 = -\mathbb{I},$$
where $\mathbb{I}$ denotes the identity endomorphism \cite{B88}. The local model for a hypercomplex manifold is the quaternionic vector space $$\h^k = \{(q_1, \ldots, q_k) : q_i \in \h\},$$
where $I_1$, $I_2$ and $I_3$ correspond to left multiplication by $i$, $j$ and $k$, respectively.
The quotient of $\h^k\backslash\{0\}$ with respect to the action of $GL(1,\h)\cong \h^*$ by left multiplication defines the manifold
$$\HP^{k-1} = \{[q_1: \ldots: q_k] : q_i \in \h\}.$$
Although the complex structures $I_1, I_2, I_3$ do not in general descend to the quotient, the subbundle they span does. Thus, the hypercomplex structure on $\h^k$ defines a subbundle $H \subset \tn{End}(T\HP^{k-1})$, which is pointwise isomorphic to $\tn{Im}\h$ as an algebra. This structure on $\HP^{k-1}$ provides a local model for our next definition.

Let $M$ be a smooth $4(k-1)$-dimensional manifold for $k>2$. A \emph{quaternionic structure} on $M$ is defined to be a subbundle $H \subset \tn{End}(TM)$ that is pointwise isomorphic to $\tn{Im}\h$ as an algebra and admits a torsion free connection preserving $H$. The group of automorphisms of the tangent plane preserving the quaternionic structure is $GL(n,\h).GL(1,\h)$, where $GL(1,\h)$ acts on the complex structures in $H$ and $GL(n,\h)$ preserves them. This group has double cover $GL(n,\h)\times GL(1,\h)$ where the deck transformation group is generated by $(-1,-1)$.

Let $V$ be the natural representation of $GL(1,\h)$ acting on the left, with $GL(n,\h)$ acting trivially. Locally there exists a $V$ bundle associated with the locally defined $GL(n,\h)\times GL(1,\h)$ principal bundle. Since there does not necessarily exist a globally defined $GL(n,\h)\times GL(1,\h)$ bundle, we take the quotient by $\{\pm 1\} \subset GL(1,\h)$ to form a bundle associated with the globally defined $GL(n,\h).GL(1,\h)$ principal bundle. This $\h/\{\pm 1\} \cong CO(3)$ bundle is referred to as the associated bundle of $M$ and is denoted by $\mc{U}(M)$. It was proven by Salamon in \cite{S86} that the total space of $\U(M)$ has a natural hypercomplex structure.

The coordinates on $\h^{k}$ can be written as $(x_1 + y_1j,\ldots, x_k + y_kj)$, where the $x_l$ and $y_l$ are complex coordinates with respect to $I_1$. The projection map $\pi_1 : \h^k \rightarrow \HP^{k-1}$ factors through the projection
$$\pi_2:\h^k \rightarrow \cp^{2k-1}; (x_1 + y_1j,\ldots, x_k + y_kj)\mapsto [x_1: y_1:\ldots:x_k:y_k].$$ Therefore, the complex manifold $\cp^{2k-1}$ is  the total space of the fibration $\pi = \pi_1\circ\pi_2^{-1}: \cp^{2k-1} \rightarrow \HP^{k-1}$.

\begin{figure}[htbp]
\begin{diagram}
\h^k/\{\pm 1\}   &                         &                  &               &           &     \U(M)            &                         &                \\
\dTo^{\pi_2}     &      \rdTo^{\pi_1}        &      \quad       &   \rImplies   &   \quad   &     \dTo^{\pi_2}     &      \rdTo^{\pi_1}&                        \\
\cp^{2k-1}       &      \rTo^{\pi}       &      \HP^{k-1}   &               &           &     Z                &      \rTo^{\pi}       &      M         \\
\end{diagram}
  \caption{The local model for twistor spaces and associated bundles}
\end{figure}

The fibres of $\pi$ are lines in $\cp^{2k-1}$. These fibres are preserved by the fixed point free anti-holomorphic involution on $\cp^{2k-1}$ induced from $I_2$. This example motivates the definition of the \emph{twistor space} of a quaternionic manifold $M$ given by Salamon in \cite{S86}: the twistor fibration is the $\cp^1$-bundle $\pi : Z \rightarrow M$ formed by projectivizing the fibres of $\U(M)$ with respect to $I_1$. The total space is referred to as the \emph{twistor space} and the fibres are the \emph{twistor lines}. Since $\U(M)$ is hypercomplex the twistor space is a complex 3-manifold. Also, $I_2$ induces a fixed point free anti-holomorphic involution preserving the twistor lines, which we refer to as the \emph{real structure}.

Quaternionic manifolds of dimension $4k$, where $k > 1$, can be thought of as a generalization of \emph{anti-self-dual} manifolds to higher dimensions, since the construction of twistor spaces given above is a generalization of the construction of twistor spaces for anti-self-dual manifolds \cite{AHS78}. Consequently a quaternionic structure is defined to be equivalent to an anti-self-dual conformal structure.

\subsection{Twistor functions and quaternionic-complex structures}\label{Subsection: Twistor function definition}
Let $M$ be a 4-manifold with an anti-self-dual conformal structure and let $K$ be the canonical bundle of the twistor space $Z$. It was shown in Section 9 of \cite{S82} that the anticanonical bundle admits a unique square root, whose restriction to a twistor line $L$ satisfies $$K^{-1/2}|_L \cong \mathcal{O}(2).$$ Moreover, it was shown that the real structure $\gamma$ lifts to a real structure on $K^{-1/2}$, which we also denote by $\tilde\gamma$. It follows that $\alpha \in H^{0}(Z;K^{-1/2})$ has two zeros on each twistor line and so, it can be written locally as \begin{equation}\label{Equation: Scalar-flat section}\alpha := az_1^2 + bz_1 z_2 + c z_2^2,\end{equation} where $[z_1\!:\!z_2]$ are coordinates on the twistor lines and $a,b,c$ are smooth functions on $M$. Consequently, when $\alpha$ is compatible with the real structure, or equivalently \begin{equation}\label{Equation: Real invariance}\alpha\circ\gamma = \tilde\gamma\circ\alpha,\end{equation} then the zeros of $\alpha$ single out a pair of complex structures that are conjugate with respect to $\gamma$.  This motivates the following well-known theorem concerning scalar-flat K\"ahler representatives of anti-self-dual conformal classes, which was proven by Pontecorvo in \cite{P92}.
\begin{T}\label{Theorem: SFK Penrose transform}
Let $Z$ be the twistor space of an anti-self-dual 4-manifold $M$. The conformal structure of $M$ has a scalar-flat K\"ahler representative if and only if there exists $\alpha \in H^{0}(Z;K^{-1/2})$ that is compatible with the real structure and non-vanishing on each twistor line.
\end{T}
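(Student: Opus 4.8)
The plan is to run the proof through the standard twistor dictionary between compatible complex structures on $M$ and divisors in $Z$, reducing the statement to a single isomorphism of holomorphic line bundles. Recall from \cite{AHS78} that a smooth section $s$ of the twistor fibration $\pi : Z \to M$ determines an almost complex structure $J_s$ on $M$, and that $J_s$ is integrable exactly when its image $D := s(M)$ is a complex surface in $Z$; such a $D$ is a section and so meets each twistor line $L$ transversally in one point, giving $\mathcal{O}(D)|_L \cong \mathcal{O}(1)$. The conjugate structure $-J_s$ corresponds to $\bar D := \gamma(D)$, and because $J_x$ and $-J_x$ are antipodal on each fibre while $\gamma$ is fixed-point free, the divisors $D$ and $\bar D$ are always disjoint. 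Hence $D + \bar D$ is a smooth, $\gamma$-invariant divisor meeting every line in two distinct points, so $[D + \bar D]|_L \cong \mathcal{O}(2) \cong K^{-1/2}|_L$. I would further use the fact that a compatible K\"ahler metric on an anti-self-dual $4$-manifold is automatically scalar-flat, since the self-dual Weyl tensor of a K\"ahler surface is determined by its scalar curvature and vanishes precisely when the latter does; thus the problem is to find a K\"ahler representative, and scalar-flatness comes for free. The whole theorem then reduces to the assertion that a compatible $J$ underlies a K\"ahler metric in $[g]$ if and only if the fibrewise identity above globalizes to $[D + \bar D] \cong K^{-1/2}$: the defining section of the effective divisor $D + \bar D$ is then the section $\alpha$, it is $\gamma$-compatible because $D + \bar D$ is, and it is non-vanishing on each line because $D$ and $\bar D$ are sections.

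Granting this reduction, the direction ($\Leftarrow$) proceeds by reconstruction. Given a real $\alpha \in H^0(Z; K^{-1/2})$ non-vanishing on each line, write $\alpha = az_1^2 + bz_1z_2 + cz_2^2$ as in \eqref{Equation: Scalar-flat section}; reality \eqref{Equation: Real invariance} forces $\gamma$ to interchange the two roots on each line, and since $\gamma$ is fixed-point free the discriminant $b^2 - 4ac$ is nowhere zero, so the roots are everywhere distinct and sweep out two disjoint sections. As $\alpha$ is holomorphic its zero locus is a complex surface, so selecting one family of roots (after passing to the double cover splitting $\{\alpha = 0\}$ if necessary, which is harmless in the simply connected setting where we apply the result) gives an integrable compatible $J$; note that the $J$-versus-$(-J)$ ambiguity is immaterial, since a metric is K\"ahler for $J$ precisely when it is K\"ahler for $-J$. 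The pairing of $\alpha$ with itself through the real structure $\tilde\gamma$ defines a positive section of a real density bundle, i.e. a positive conformal factor rescaling $[g]$ to a metric $\hat g$ that is Hermitian for $J$; that $\hat g$ is in fact K\"ahler (hence scalar-flat) is exactly the statement that the holomorphic section $\alpha$ realizes the isomorphism $[D + \bar D] \cong K^{-1/2}$. For ($\Rightarrow$) one runs this backwards: a scalar-flat K\"ahler $(\hat g, J)$ gives $D$, its conjugate $\bar D$, and a holomorphic defining section of $D + \bar D$, and the content is again that scalar-flatness forces $[D + \bar D] \cong K^{-1/2}$.

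The main obstacle, common to both directions, is establishing the line-bundle isomorphism $[D + \bar D] \cong K^{-1/2}$ from the scalar-flat K\"ahler condition and conversely. This is a genuine curvature computation rather than a formal consequence of the fibrewise matching, because $\pi$ is not holomorphic and the base $M$ is not a complex manifold, so one cannot simply pull back from the base. The route I would take, following Pontecorvo \cite{P92}, is via the Penrose transform made available by the square root $K^{-1/2}$ of the anticanonical bundle and the lift of the real structure to it, both constructed by Salamon in Section 9 of \cite{S82}: under this transform, holomorphic sections of $K^{-1/2}$ correspond to solutions of a conformally invariant second-order equation on $M$, and the $\bar\partial$-obstruction to promoting the smooth defining section of $D + \bar D$ to a holomorphic one is precisely the scalar curvature of the associated Hermitian metric, computed through adjunction $K_D \cong (K_Z \otimes [D])|_D$ on $D \cong (M, J)$ together with the anti-self-duality of the Ricci form of a scalar-flat K\"ahler surface. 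The remaining points requiring care are matching the positivity in ``non-vanishing on each line'' with the positivity of the conformal factor so that $\hat g$ is a genuine Riemannian metric, and checking that the resulting structures are conjugation-symmetric, as guaranteed by \eqref{Equation: Real invariance}, so that they descend to real data on $M$.
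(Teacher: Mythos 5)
The paper gives no proof of this theorem: it is stated as a known result and attributed to Pontecorvo \cite{P92}, so the only meaningful comparison is with Pontecorvo's argument, and your proposal is essentially a faithful reconstruction of its architecture. The formal skeleton is sound and matches the source: the twistor correspondence between integrable compatible complex structures $J$ and holomorphic sections $D$ of $\pi$; the disjointness of $D$ and $\bar D = \gamma(D)$ because $\gamma$ is antipodal on fibres and fixed-point free; the fibrewise identification $[D+\bar D]|_L \cong \mathcal{O}(2) \cong K^{-1/2}|_L$; the fact that a K\"ahler metric in an anti-self-dual conformal class is automatically scalar-flat (the eigenvalues of $W^+$ on a K\"ahler surface are $(s/6,-s/12,-s/12)$); and, in the converse direction, the discriminant argument that reality (\ref{Equation: Real invariance}) together with fixed-point-freeness of $\gamma$ forces two distinct antipodal roots on every line, hence a smooth divisor transverse to the fibres. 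You also correctly flag the one subtlety that the paper's statement itself glosses over: the zero divisor of $\alpha$ is a priori only a double cover of $M$ and splits as $D + \bar D$ when $M$ is simply connected, which is exactly the setting in which the paper invokes the theorem (in Subsection \ref{Subsection: Theorem B} via Proposition \ref{Proposition: Theorem A}).

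As a self-contained proof, however, your text has a gap that you yourself identify: both directions are reduced to the equivalence ``$J$ admits a K\"ahler representative in $[g]$ if and only if $[D+\bar D] \cong K^{-1/2}$ globally,'' and this equivalence \emph{is} the content of the theorem, yet it is only sketched. Your sketch does point down the correct road: the bundle $K^{-1/2}\otimes[D+\bar D]^{-1}$ has degree zero on every twistor line, so the Ward correspondence for line bundles identifies it with a line bundle with anti-self-dual $U(1)$-connection on $M$; Pontecorvo computes that the resulting curvature is governed by the Lee form of the Hermitian metric determined by $D$, so that holomorphic triviality of the degree-zero bundle is equivalent to exactness of the Lee form, i.e.\ to the existence of a K\"ahler representative, which is then scalar-flat by anti-self-duality. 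Your appeal to adjunction and to Salamon's construction of $K^{-1/2}$ and its real structure \cite{S82} gestures at this computation without performing it, deferring to \cite{P92} at precisely the pivotal step. Since the paper defers in exactly the same way, your proposal is as complete as the paper's own treatment of the statement; to stand as an actual proof it would need that curvature identification carried out explicitly, and that is the only substantive omission.
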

The section $\alpha$ can be pulled back to $\U(M)$ as $$\beta := \pi_2^*{\alpha}.$$ Then the local coordinates $[z_1: z_2]$, with respect to which $\alpha$ was written in (\ref{Equation: Scalar-flat section}), can be pulled back to local coordinates $z_1, z_2$ on the double cover of $\U(M)$, which has fibre $\h^*$. So, locally, $\beta$ can be regarded as a function on the double cover of $\U(M)$. Moreover, this function is quadratic on the fibres, holomorphic with respect to $I_1$ and satisfies the reality condition $$\beta\circ I_2 = \bar{\beta}.$$

We now define $\mu_2 := \tn{Re}(\beta),$ $\mu_3 := \tn{Im}(\beta)$ and suppose that $\mu_1$ satisfies $\tn{d}\mu_1 = -I_3 \tn{d}\mu_2$. Then the reality condition with respect to $I_2$ and the holomorphicity condition with respect to $I_1$ satisfied by $\beta$ imply that \begin{equation}\label{Equation: Holomorphicity} I_1\tn{d}\mu_1 = I_2\tn{d}\mu_2 = I_3\tn{d}\mu_3,\end{equation} where $I_1,I_2$ and $I_3$ act on the $T^*\U(M)$ factor. Thus, using $\beta$, we have obtained a non-vanishing function $\mu = (\mu_1,\mu_2,\mu_3): \U(M)\rightarrow \tn{Im}\h$ that is quadratic on the (double cover of) fibres of $\U(M)$ and satisfies (\ref{Equation: Holomorphicity}). Conversely, $\beta$ can be reconstructed from $\mu$.

\begin{TD}
We refer to a function $\mu = (\mu_1,\mu_2,\mu_3): \U(M)\rightarrow \tn{Im}\h$ that is quadratic on the (double cover of) fibres of $\U(M)$ and satisfies (\ref{Equation: Holomorphicity}) as a \emph{twistor function}. Thus, a scalar-flat K\"ahler structure on an anti-self-dual 4-manifold can be defined as a quaternionic structure equipped with a non-vanishing twistor function.
\end{TD}

If $M$ is a $4k$-dimensional quaternionic manifold, we can similarly define a twistor function on $\U(M)$. Note that a twistor function $\mu$ could alternatively be considered as a section of $H$ in coordinates on $M$, although for practical purposes we will generally consider $\mu$ as a function on $\U(M)$. If this section of $H$ is non-vanishing, then it distinguishes a complex structure on $M$. Thus, we will refer to a quaternionic manifold equipped with a non-vanishing twistor function as a \emph{quaternionic-complex} manifold. For a more complete discussion see Section 7 of \cite{J91}.
\begin{TR}
Equation (\ref{Equation: Holomorphicity}) will be known as the \emph{holomorphicity} condition as it is equivalent to the Cauchy-Riemann conditions for $\mu_2 + i \mu_3$ with respect to $I_1$, $\mu_3 + i \mu_1$ with respect to $I_2$, and $\mu_1 + i \mu_2$ with respect to $I_3$.
\end{TR}

\subsection{The quaternionic quotient}\label{Subsection: Defining quaternionic quotient}
We will describe a group action on a manifold as \emph{locally free} if the stabilizer at any point is finite. In this subsection we define the quaternionic quotient with respect to a locally free group action. Since we do not assume the action is free, these quotients may be orbifolds. We will assume the reader is familiar with orbifolds and so, we only remark that the way in which tensor bundles extend to orbifolds allows all the definitions above to be extended in the usual fashion, cf. \cite{S57}.

Let $M$ be a quaternionic manifold, let $G$ be a connected Lie group acting smoothly and locally freely on $M$ that preserves the quaternionic structure, and let $\g$ be the Lie algebra of $G$. The action of $G$ on $M$ induces an action of $G$ on $\U(M)$. We will denote the map sending $g\in\g$ to the vector field on $\U(M)$ corresponding to the infinitesimal action of $g$ by $$\mb{A}:\g\rightarrow \Gamma(T\U(M)).$$ Let
$$\mu = (\mu_1,\mu_2,\mu_3):\U(M) \rightarrow \g^*\otimes \tn{Im}\h$$
be a $\g^*$-valued twistor function that is $G$-equivariant with respect to the induced action of $G$ on $\U(M)$ and the co-adjoint action on $\g^*$. We define the \emph{transversality condition} at $p\in \U(M)$ as follows: for any non-zero $g\in \g$ there exists $g'\in\g$ such that
\begin{equation}\label{Equation: Transversality}
(I_1\tn{d}{\mu_1}_p(g')).(\mb{A}_p(g))\neq 0.
\end{equation}
This ensures that in a neighbourhood of $p$ the $I_i$-complexification of the $G$-orbits intersect the level sets of $\mu_i$ once transversely.
\begin{TP}[Joyce \cite{J91}]\label{Proposition: Quaternionic quotient}
Let $M$ be a quaternionic manifold and let $G$ be a Lie group preserving the structure. Suppose that there exists a $G$-equivariant twistor function $\mu: \U(M) \rightarrow \g^*\otimes \tn{Im}\h$ that is not everywhere zero. We define three sets $$P := \mu^{-1}(0) - \{0\}\subset \U(M),$$ $$\quad Q := \pi_2(P) \subset Z,$$ $$R := \pi_1(P) \subset M.$$
If $\mu$ satisfies the transversality condition (\ref{Equation: Transversality}) on $P$, then we will refer to $\mu$ as a \emph{quaternionic moment map} for the action of $G$. If $G$ acts smoothly and locally freely on $R$ then the \emph{quaternionic quotient} of $M$ by $G$, which is defined as $R/G$, is an orbifold with a natural quaternionic structure. The twistor space of $R/G$ is given by $Q/G$, and the associated bundle by $P/G$.
\end{TP}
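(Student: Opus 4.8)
The plan is to reduce the hypercomplex structure that Salamon's theorem places on the total space of $\U(M)$ and then to recognise the reduced space as the associated bundle of $R/G$. Throughout I would work on $\U(M)$, noting that although $\mu$ is a priori defined on the double cover with fibre $\h^*$, it is quadratic and hence even on the fibres, so it descends; moreover, since $\mu$ is $G$-equivariant and vanishes on $P$, the infinitesimal generators $\mb{A}_p(\g)$ lie in $T_pP := \bigcap_i\ker\tn{d}\mu_i|_p$, so $G$ preserves $P$. The whole construction is thus a hypercomplex analogue of the hyperK\"ahler quotient, with the holomorphicity condition (\ref{Equation: Holomorphicity}) standing in for the moment-map identity that is unavailable in the absence of a metric.

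First I would show that $P$ is a smooth submanifold and extract the linear algebra that drives the reduction. Condition (\ref{Equation: Holomorphicity}) makes the $\g^*$-valued one-form $I_i\tn{d}\mu_i$ independent of $i$; call it $\omega$, so that $\tn{d}\mu_i = -I_i\omega$. The transversality condition (\ref{Equation: Transversality}) says precisely that the pairing of $\omega$ with the $I_1$-complexified orbit directions $I_1\mb{A}_p(\g)$ is non-degenerate, and by the quaternion relations this transfers to each $I_j\mb{A}_p(\g)$. From this I would deduce two facts: that $\tn{d}\mu_p$ is surjective, so $P=\mu^{-1}(0)-\{0\}$ is a submanifold of codimension $3\dim G$; and that the quaternionic span $\mc{D}_p := \mb{A}_p(\g)+I_1\mb{A}_p(\g)+I_2\mb{A}_p(\g)+I_3\mb{A}_p(\g)$ has dimension $4\dim G$ with $\mc{D}_p\cap T_pP = \mb{A}_p(\g)$. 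A dimension count then gives $T_pP + \mc{D}_p = T_p\U(M)$. Local freeness of the $G$-action, inherited from the hypothesis on $R$, makes $P/G$ and $R/G$ orbifolds.

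The crux is the descent of the hypercomplex structure. Because $\mc{D}_p$ is the quaternionic span of the orbit it is invariant under each $I_i$ (the quaternion relations permute the summands $I_j\mb{A}_p(\g)$ among themselves up to sign), so $T_p\U(M)/\mc{D}_p$ carries well-defined induced endomorphisms $\bar I_1,\bar I_2,\bar I_3$ satisfying $\bar I_1\bar I_2\bar I_3 = -\mathbb{I}$. Since $T_pP + \mc{D}_p = T_p\U(M)$ and $T_pP\cap\mc{D}_p = \mb{A}_p(\g)$, the inclusion $T_pP\hookrightarrow T_p\U(M)$ induces an isomorphism $T_pP/\mb{A}_p(\g)\xrightarrow{\sim} T_p\U(M)/\mc{D}_p$, along which I would transport these complex structures to the tangent space of $P/G$; $G$-invariance of everything in sight lets them descend. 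For integrability I would pass to the twistor space: $\pi_2$ is $G$-equivariant, so it identifies $Q/G = \pi_2(P)/G$ with the $\cp^1$-bundle obtained by projectivising the fibres of $P/G$ with respect to $I_1$, and I would show $Q/G$ is a complex manifold by realising the constraint $\mu_1 = 0$ inside the complex locus $\{\mu_2 + i\mu_3 = 0\}\subset Z$ as a K\"ahler-type reduction, whose quotient by the real group $G$ is again complex. Integrability of the reduced hypercomplex structure on $P/G$ is equivalent to complexity of $Q/G$.

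Finally I would identify $P/G$ with $\U(R/G)$. The equivariant fibration $\pi_1\colon P\to R$ descends to $P/G\to R/G$, and because $G$ preserves the quaternionic structure it commutes with the fibrewise $\h^*/\{\pm1\}$-action, exhibiting $P/G$ as a $CO(3)$-bundle over $R/G$ with hypercomplex total space — exactly the defining data of the associated bundle. The fixed-point-free anti-holomorphic involution induced by $I_2$ descends for the same reason, so $Q/G$ is the twistor space of $R/G$ and $R/G$ acquires a subbundle of $\tn{End}(T(R/G))$ pointwise isomorphic to $\tn{Im}\h$ with a compatible torsion-free connection, i.e. a quaternionic structure. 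I expect the descent step to be the principal obstacle: without a metric on $\U(M)$ the orthogonal-complement argument of the hyperK\"ahler quotient is unavailable, so establishing the splitting $\mc{D}_p\cap T_pP = \mb{A}_p(\g)$ and the $I_i$-invariance of $\mc{D}_p$ purely from (\ref{Equation: Holomorphicity}), (\ref{Equation: Transversality}) and the quaternion relations — and then securing integrability through the complex structure on $Q/G$ — is where the real work lies.
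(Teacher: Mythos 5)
The paper contains no internal proof of this proposition: it is quoted, construction and verification included, from Joyce \cite{J91}, so the only meaningful comparison is with Joyce's argument itself. Measured against that, your plan is in substance a faithful reconstruction: the quaternionic quotient is by definition the hypercomplex quotient of the Salamon--Swann bundle $\U(M)$; integrability is obtained by realising the reduction locally as a quotient by the complexified action inside the $I_1$-complex locus $\{\mu_2 + i\mu_3 = 0\}$, and likewise for $I_2$, $I_3$; and $P/G$ is then recognised as $\U(R/G)$ via the surviving fibrewise $\h^*/\{\pm 1\}$-action, with $Q/G$ the twistor space. That is exactly the architecture of the cited proof.

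One substantive caution about the step you yourself identify as the crux. Writing $\omega := I_1\tn{d}\mu_1$, the transversality condition (\ref{Equation: Transversality}) controls only the pairings $(I_i\tn{d}\mu_i(g'))(I_i\mb{A}(g)) = \pm\,\omega(g')(\mb{A}(g))$, which is why, for each fixed $i$, the $I_i$-complexified orbits meet the $\mu_i$-level sets transversally. But your claims that $\tn{d}\mu_p$ is surjective and that $\mc{D}_p \cap T_pP = \mb{A}_p(\g)$ involve in addition the cross-pairings $\omega(g')(I_j\mb{A}(g))$, about which the hypothesis says nothing. Assembling the four pairings into a quaternionic matrix whose real part is the non-degenerate form of (\ref{Equation: Transversality}), one finds that invertibility of the real part does \emph{not} formally imply the absence of purely imaginary quaternionic kernel vectors: already $\mathbb{I} + iC$, with $C$ a rotation by $\pi/2$ in the plane, is singular over $\h$ with kernel vectors of the required imaginary form. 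So ``a dimension count then gives'' is not a consequence of pointwise linear algebra from the stated condition alone; this is precisely the quaternionic linear algebra where Joyce's proof does real work, and your write-up would need either to import his lemma or to supply that computation rather than assert the two facts.
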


\begin{figure}[htbp]
\begin{diagram}
P \subset \h^k/\{\pm 1\} & \rTo& \U(M)\\
\dTo && \dTo\\
Q \subset \cp^{2k-1} &\rTo & Z\\
\dTo &&\dTo_\pi\\
R \subset \HP^{k-1} & \rTo & M\\
\end{diagram}
\caption{The quaternionic quotient construction}
\end{figure}

\begin{TR}\label{Remark: Defining momemt map}
Let $h$ be a metric on $\U(M)$ and let $\mb{B}\in \g^*\otimes T\U(M)$ be the vector field that contracts with $h$ to give $I_1 \tn{d}\mu_1$. Then the holomorphicity condition (\ref{Equation: Holomorphicity}) becomes $$I_i\tn{d}\mu_i = \mb{B}^\sharp,$$ where $\,{}^\sharp\,$ denotes contraction with $h$, and the transversality condition (\ref{Equation: Transversality}) can be written as
$$h(\mb{A},\mb{B}) \neq 0.$$ Now suppose $h$ is a \emph{hyperK\"ahler} metric with respect to the hypercomplex structure on $\U(M)$ and so, $h$ descends to a \emph{quaternionic-K\"ahler} metric on $M$ \cite{S90}. If $\mb{A} = \mb{B}$, then $\mu$ is the quaternionic-K\"ahler moment map for the action of $G$ on $M$ defined in \cite{GL88}. So, the quaternionic moment map generalizes the quaternionic-K\"ahler moment map by allowing some freedom in the choice of $\mb{B}$. The only restriction on $\mb{B}$ is the transversality condition, which in the quaternionic-K\"ahler case is automatically satisfied as the $I_i$-complexified $G$-orbits meet the $\mu_i$ level sets \emph{orthogonally}.
\end{TR}

Applying Proposition \ref{Proposition: Quaternionic quotient} to the quaternionic-complex context we have the following corollary defining the \emph{quaternionic-complex quotient}:
\begin{TC}\label{Corollary: Quaternionic-complex quotient}
Let $(M,\nu)$ be a quaternionic-complex manifold and let $G$ be a Lie group preserving the structure. Suppose that there exists a quaternionic moment map for $G$ that is not everywhere zero and does not contain $\nu$ in the span of its components. If $G$ acts smoothly and locally freely on $R$, then $\nu$ descends to a quaternionic-complex structure on $R/G$. We will refer to this procedure as taking the quaternionic-complex quotient. When dim$\;M = 4k$ and dim$\;G = k-1$ the quaternionic-complex quotient is a scalar-flat K\"ahler orbifold.
\end{TC}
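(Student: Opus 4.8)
The plan is to derive this statement as a direct consequence of Proposition \ref{Proposition: Quaternionic quotient}, the only new content being that the distinguished complex structure survives the reduction. Since by hypothesis $\mu$ is a quaternionic moment map that is not everywhere zero and $G$ acts smoothly and locally freely on $R$, Proposition \ref{Proposition: Quaternionic quotient} already supplies a quaternionic orbifold structure on $R/G$ whose associated bundle is $P/G$ and whose twistor space is $Q/G$. It therefore remains only to exhibit a non-vanishing twistor function on $\U(R/G) = P/G$, after which the quaternionic-complex structure is automatic from the definition.

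First I would descend $\nu$ as a function. Because $G$ preserves the quaternionic-complex structure $(M,\nu)$, the induced action on $\U(M)$ fixes $\nu$, so the restriction $\nu|_P$ is $G$-invariant and passes to a well-defined map $\bar\nu : P/G \rightarrow \tn{Im}\,\h$. Being quadratic on the fibres of $\U(M)$ is a fibrewise condition that is respected by the identification of $P/G$ with $\U(R/G)$, so $\bar\nu$ is again quadratic on the fibres.

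The substance of the argument is to verify that $\bar\nu$ satisfies the holomorphicity condition (\ref{Equation: Holomorphicity}) with respect to the induced hypercomplex structure on $P/G$, and that it is non-vanishing. For the first point I would use that $P/G$ inherits its hypercomplex structure as the reduction of that on $\U(M)$: the induced complex structures $\bar I_i$ are obtained by restricting $I_i$ to a horizontal distribution in $TP$ complementary to the $G$-orbits and their $I_i$-images, and then pushing down to the quotient. Since $\nu$ satisfies $I_1\tn{d}\nu_1 = I_2\tn{d}\nu_2 = I_3\tn{d}\nu_3$ on all of $\U(M)$, restricting this identity to the horizontal directions and projecting should yield $\bar I_1\tn{d}\bar\nu_1 = \bar I_2\tn{d}\bar\nu_2 = \bar I_3\tn{d}\bar\nu_3$, which is precisely (\ref{Equation: Holomorphicity}) on $\U(R/G)$. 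For non-vanishing, note that $\mu$ vanishes identically on $P$ by construction, so if $\nu$ lay in the span of the components of $\mu$ then $\nu$ would vanish on $P$; the hypothesis that $\nu$ is not contained in this span, together with the non-vanishing of $\nu$ on $\U(M)$, is exactly what guarantees that $\bar\nu$ remains a genuine, non-degenerate twistor function transverse to the reduction data. I expect the compatibility of the two hypercomplex structures — and in particular the careful bookkeeping of horizontal versus vertical directions underlying Remark \ref{Remark: Defining momemt map} — to be the main obstacle; the remaining verifications are formal.

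Finally, for the scalar-flat K\"ahler conclusion I would argue by a dimension count. When $\dim M = 4k$ the manifold has quaternionic dimension $k$, and reducing by $G$ of dimension $k-1$ removes $k-1$ quaternionic dimensions, so $R/G$ has quaternionic dimension one, that is, real dimension four. A four-dimensional quaternionic orbifold is precisely an anti-self-dual conformal orbifold, and $\bar\nu$ furnishes a non-vanishing twistor function on it; equivalently it determines a section of $K^{-1/2}$ over the twistor space $Q/G$ that is compatible with the real structure and non-vanishing on each twistor line. Theorem \ref{Theorem: SFK Penrose transform} then identifies the associated conformal representative as scalar-flat K\"ahler, which completes the proof.
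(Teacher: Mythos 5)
Your proposal is correct and takes essentially the same route as the paper, which states this result as an immediate consequence of Proposition \ref{Proposition: Quaternionic quotient} with no further proof beyond the remark that if $\nu$ were in the span of the components of the moment map it would descend to a twistor function vanishing on $R/G$ (precisely your observation that $\mu\equiv 0$ on $P$), deferring the descent of the hypercomplex data to Section 7 of \cite{J91}. Your concluding step — dimension count plus the identification, via Theorem \ref{Theorem: SFK Penrose transform}, of a four-dimensional quaternionic orbifold with non-vanishing twistor function as scalar-flat K\"ahler — is exactly the definition the paper sets up in Subsection \ref{Subsection: Twistor function definition}.
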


Note that, if $\nu$ is contained in the span of the components of the moment map at some point, then $\nu$ will descent to a twistor function that vanishes on $R/G$. Therefore, the complex structure on $M$ associated with $\nu$ will not descend to a well-defined complex structure on $R/G$.

\subsection{Compact 4-orbifolds with torus actions}\label{Subsection: Definite Orbifolds}
In this subsection we give a description of a class of 4-orbifolds admitting torus invariant anti-self-dual structures. The framework we use was introduced by Haefliger and Salem in \cite{HS91}, for a more detailed summary than we present here, see \cite{W09a}.

Let $M$ be a compact and simply-connected 4-orbifold admitting a smooth and effective action of a 2-torus, which we will denote by $F$. We will denote the Lie algebra of $F$ by $\mf{f}$ and the lattice of generators by $\Lambda \subset \mf{f}$.  The orbit space of $M$ is topologically a closed disc $N$, the interior of which consists of orbits with trivial stabilizer subgroup. The boundary of $N$ contains isolated orbits, fixed by the action of $F$, which divide $\partial N$ into connected components. The non-fixed orbits in these components are stabilized by an $S^1$-subgroup of $F$, which corresponds to a primitive element in $\Lambda$. At orbifold points, this subgroup lifts to a stabilizer subgroup in the orbifold cover, and the corresponding lattice point lifts to a non-primitive $u \in \Lambda$. Note that $u$ is only determined by the subgroup up to sign. Therefore, we will avoid this ambiguity by restricting to the $180^\circ$ sector in $\mf{f}$ where either $u = (p,0)$ or $(0,1).u > 0$, with respect to a fixed choice of coordinates for $F$.

Between consecutive $F$-fixed points on $\partial N$ we have associated $u \in \Lambda$, and any consecutive lattice points must be linearly independent. Moreover, in order for $M$ to be simply-connected, the set of lattice points must generate $\Lambda$. The boundary has an induced orientation and so, once we have chosen the first fixed point, we can label these pairs consecutively as $u_1, \ldots, u_k$, where $k -2 = b_2(M)$. In this way, a simply-connected 4-orbifold determines the ordered set of combinatorial data $$\s := \{u_1,\ldots,u_k\} \subset \Lambda.$$
Conversely, an ordered set $\s$ generating $\Lambda$, such that the consecutive pairs are linearly independent, can be used to reconstruct a unique simply-connected orbifold. We will denote the compact toric 4-orbifold corresponding to the combinatorial data $\s$ by $\ms$.

Now suppose that rather than spanning $\Lambda$ the combinatorial data $\s$ spans the sublattice $\Lambda_\s$. There is a compact toric 4-orbifold, which we will denote by $\ms$, that has $S^1$-stabilizer subgroups corresponding to $\s$ in the same way as above. However, $\ms$ will no longer be simply-connected, as it is the quotient of a simply-connected 4-orbifold by the finite subgroup of the torus action generated by $\Lambda/\Lambda_\s$. Conversely, the quotient of any simply-connected orbifold by a finite subgroup of the torus action corresponds to some such $\s$.

As shown in Theorem 1 of \cite{W09a}, $\ms$ admits an anti-self-dual conformal structure if and only if the intersection form on $H^2(\ms,\mathbb{Q})$ is \emph{negative-definite}. We will choose the coordinates for the action of $F$ so that $u_1 = (p,0)$. Then, in terms of $\s$, negative-definiteness is equivalent to $u_1, \ldots, u_k$ being the outward pointing normals to a convex polytope \cite{CS06}. We will refer to $\s$ as \emph{convex}, if $\ms$ has a negative-definite intersection form.

\subsection{Toric anti-self-dual 4-orbifolds}
When $\s$ is convex, $\ms$ admits an anti-self-dual conformal structure. This conformal structure induces the conformal structure of an open disc on $N^\circ$. We will denote the $F$-fixed orbit corresponding to $x_i$ by $\zeta_i$, and the ordered set $$\{\zeta_1, \ldots, \zeta_k\} \subset \partial N,$$ by $\mc{R}$. In Theorem A of \cite{W09a} it was shown that the conformal data $\mc{R}$ determines the conformal structure on $\ms$, up to a conformal isometry. Thus, we will denote the anti-self-dual orbifold corresponding to $\mc{R}$ and $\s$ by $\mrs$. Theorem A of \cite{W09a} states that every compact anti-self-dual 4-orbifold with $\chi_{orb} > 0$ whose isometry group contains a 2-torus is conformally equivalent to $\mrs$, for some $\mc{R}$ and convex $\s$. Since $\mrs$ can be obtained from its universal cover by taking the quotient by $\Lambda/\Lambda_\s$ (where $\s$ generates $\Lambda_\s$), Theorem A reduces to the next proposition.

\begin{TP}\label{Proposition: Theorem A}
Suppose $\mrs$ is simply-connected. Then there is a conformal isometry between $\mrs$ and a quaternionic quotient of $\HP^{k}$ by a $(k-1)$-dimensional subtorus of the maximal torus in $Sp(1).Sp(k)$, where $k - 1 = b_2(\mrs)$.
\end{TP}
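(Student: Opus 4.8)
The plan is to feed the combinatorial data $\s$ into the definition of the torus and the conformal data $\mc{R}$ into the definition of the moment map. Since $\mrs$ is simply-connected, $\s$ generates $\Lambda$, and because $b_2(\mrs) = k-1$ the data $\s$ consists of $k+1$ vectors $u_0, \ldots, u_k$. These determine a linear surjection $\rho : \R^{k+1} \to \mf{f}$, $e_i \mapsto u_i$, and I take $G = T^{k-1}$ to be the subtorus of the maximal torus of $Sp(1).Sp(k)$ whose Lie algebra is $\g = \ker \rho$, embedded via the standard weights of the coordinate action on $\HP^k$. This is the direct analogue of the torus used in the Galicki--Lawson and \cite{BGMR98} quaternionic-K\"ahler reductions, chosen so that the residual torus $\R^{k+1}/\g \cong \mf{f}$ acts on the eventual quotient and so that the circle isotropy data along the boundary of the orbit disc reproduces $u_0, \ldots, u_k$.

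First I would write down, on the double cover $\h^{k+1}$ of $\U(\HP^k)$, the family of $\g^*$-valued twistor functions for $G$. By Remark \ref{Remark: Defining momemt map} such a function is fixed by the hyperK\"ahler data together with a choice of the field $\mb{B}$, and relative to the flat metric the admissible $\mu$ are quadratic-plus-constant on the fibres: one may start from the quaternionic-K\"ahler moment map (the rigid case $\mb{A} = \mb{B}$) and deform it both by a central shift in $\g^* \otimes \tn{Im}\,\h$ and, crucially, by rescaling the quadratic part componentwise, subject only to the holomorphicity condition (\ref{Equation: Holomorphicity}). This last deformation is exactly the extra freedom that separates the quaternionic quotient from the quaternionic-K\"ahler quotient, and I would parametrize it so that the free parameters match, in number and range, the configurations of $k+1$ marked points on $\partial N$ that constitute the conformal data $\mc{R}$.

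Next I would verify that for such a $\mu$ the transversality condition (\ref{Equation: Transversality}) holds along $P = \mu^{-1}(0) \setminus \{0\}$ and that $G$ acts locally freely on $R = \pi_1(P)$, so that Proposition \ref{Proposition: Quaternionic quotient} applies and produces an anti-self-dual $4$-orbifold $R/G$ carrying the residual $\mf{f}$-action. I would then identify $(R/G, \mf{f})$ with the toric orbifold $\ms$: the $\mf{f}$-fixed orbits come from the coordinate quaternionic lines of $\HP^k$ surviving in $P$, and computing the circle stabilizers along the edges of the orbit disc recovers precisely $u_0, \ldots, u_k$. This step is combinatorial and parallels the identification of the underlying space in the quaternionic-K\"ahler setting; the convexity of $\s$ is what guarantees that the relevant level sets are nonempty and the quotient compact.

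The substance of the proof is to show that the induced conformal structure is the prescribed $\mrs$ for every admissible $\mc{R}$. Here I would pass to the twistor space furnished by Proposition \ref{Proposition: Quaternionic quotient}, namely $Q/G = \pi_2(P)/G$, which is cut out as the GIT quotient of the intersection of the $k-1$ quadrics $\{\mu_i = 0\}$ in $\cp^{2k+1}$ by the complexified torus $(\C^*)^{k-1}$. On this threefold I would compute the real structure together with the divisors and meromorphic functions that, by Theorem A of \cite{W09a}, classify $\mrs$, and then track how they depend on the moment-map parameters. The main obstacle is exactly this comparison: showing that the map from moment-map parameters to conformal invariants is \emph{surjective} onto the space of admissible $\mc{R}$, so that the freedom in $\mb{B}$ --- absent in the rigid quaternionic-K\"ahler case, where only self-dual Einstein structures arise --- suffices to realize an arbitrary anti-self-dual structure on $\ms$. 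Establishing transversality uniformly across this family and matching the resulting meromorphic functions to the marked points $\zeta_i$ encoded in $\mc{R}$ is where the real work lies.
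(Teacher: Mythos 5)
Your overall architecture matches the paper's (build the torus from the combinatorics, the moment map from the conformal data, check transversality and local freeness, then identify the quotient via the meromorphic function classification), but the first step contains a concrete error. In the paper's indexing ($\s = \{u_1,\ldots,u_k\}$, quotient of $\HP^{k-1}$ by a $(k-2)$-torus), the correct weight map is \emph{not} $e_i \mapsto u_i$: it is $\Omega : e_i \mapsto v_i$ of (\ref{Equation: Quotient map}), where $v_1 = u_1 + u_k$ and $v_i = u_i - u_{i-1}$ is the transformed data $\mc{T}$ from Lemma \ref{Lemma: Meromorphic classification}. The boundary isotropy of the quotient is \emph{not} given by the weights upstairs: a face point $[1:x_2:\ldots:x_m:y_{m+1}j:\ldots:y_kj]$ has $T^k$-stabilizer in the direction $(1,\ldots,1,\pm,-1,\ldots,-1)$, whose image in $\mf{f}$ is $\sum_{j\leq m} v_j - \sum_{j>m} v_j = 2u_m$ precisely because of the $u \leftrightarrow v$ transform; with your $\rho(e_i) = u_i$ the face isotropies would be half-sums $\frac{1}{2}\big(\sum_{j\leq m}u_j - \sum_{j>m}u_j\big)$, which are not the $u_m$, so the quotient would not be $\ms$. (The classical sanity check is Galicki--Lawson: the quotient of $\HP^2$ by the circle of weights $(p_1,p_2,p_3)$ is a weighted projective plane whose weights are the \emph{pairwise sums}, not $(p_1,p_2,p_3)$.) Relatedly, you never address the lattice bookkeeping: $T^k$ is $\hat{T}^k/\{\pm 1\}$, whose lattice contains $\frac{1}{2}(e_1+\ldots+e_k)$, and it is exactly this half-lattice together with the $v$-transform that makes $T^k/G_\s = F$ with lattice generated by $u_1,\ldots,u_k$ --- which is where the simple-connectedness hypothesis is used.

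On the moment map, two further problems. First, your ``central shift'' deformation is inadmissible: a twistor function is by definition fibrewise \emph{quadratic} on $\U$, so the $T^k$-invariant quaternionic moment maps are exactly the linear projections $B^*.\nu$ of the span of $\nu_1,\ldots,\nu_k$ from (\ref{Equation: Twistors}) as in (\ref{Equation: Moment map}); additive constants in $\g^*\otimes\tn{Im}\h$ are a hyperK\"ahler-moment-map freedom on $\h^k$ that does not exist here. Second, and more seriously, you defer the crux --- surjectivity of the parameter-to-conformal-data map --- calling it ``where the real work lies,'' but that is the step a proof must supply. The paper makes it vacuous by building $\mu_\mc{R}$ \emph{directly} from $\mc{R}$: set $z_i = \zeta_i^{1/2}$, let $\mathbb{W} = \langle\tn{Re}(\bs z), \tn{Im}(\bs z)\rangle \subset (\R^k)^*$, and choose $B^*$ with kernel $\mathbb{W}$. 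Then the twistor line through the point with $x_j = 1$, $y_j = z_j$ automatically lies in $Q$, the descended meromorphic function is $\psi(z) = \prod_i\big(\frac{z+z_i}{z-z_i}\big)^{v_i}$, and Lemma \ref{Lemma: Meromorphic classification} identifies the quotient with $\mrs$ with no family or deformation argument at all. Finally, transversality is not a routine verification: Lemma \ref{Lemma: Transversality} reduces it, on the $T^k\times CO(3)$-orbit space of $P$, to Joyce's non-degeneracy condition (\ref{Equation: Boundary non-degenacy}), and it is there --- not merely in nonemptiness or compactness of the level set --- that the convexity of $\s$ is actually consumed.
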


In order to prove this proposition we use a characterization of $\mrs$ in terms of a meromorphic function on its twistor space, which we state in the next lemma. Before we do this, note that the action of $F$ induces an action of its complexification $F_\C \cong \C^*\times\C^*$ on the twistor space of $\mrs$. (This lemma is a corollary of the proof of Theorem A in \cite{W09a}.)

\begin{TL}\label{Lemma: Meromorphic classification} Let $M$ be a compact anti-self-dual 4-orbifold with $\chi_{orb}(M) > 0$, and suppose that there is an effective action of $F$ preserving the conformal structure. Let $Z$ be the twistor space of $M$ and let $L$ be a real twistor line in $Z$ about which the action of $F_{\C}$ is locally free. An affine coordinate $z$ can be chosen on $L$ such that:
\begin{enumerate}
\item{the real structure is given by $z \mapsto \bar{z}^{-1}$;}
\item{there is a $F_{\C}$-valued holomorphic function on $L  - \{\pm z_1, \ldots, \pm z_k\}$ satisfying $\psi(z).z = -z$ that can be written as $$\psi(z) = \prod_{i=1}^k \big(\frac{z+z_i}{z-z_i}\big)^{v_i}$$ for some $v_1, \ldots, v_k \in \Lambda$;}
\item{and $z_j = e^{i\theta_j}$ for $j = 1, \ldots, k$, where  $$0 = \theta_1 < \theta_2 < \ldots < \theta_k < \pi.$$}
\end{enumerate}
If $v_1 = u_1 + u_k$ and $v_i = u_i - u_{i-1}$ for some $\s$, and there is a conformal isometry mapping the ordered set $\{z_1^2, \ldots, z_k^2\} \subset \{w \in \C : |w| \leq 1\}$ to $\mc{R} \subset N$, then $M$ is conformally equivalent to $\mrs$.\end{TL}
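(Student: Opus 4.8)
The plan is to read off the coordinate $z$ and the data $z_i, v_i$ directly from the twistor geometry of $M$, and then to close the argument with the uniqueness half of Theorem A of \cite{W09a}; indeed the lemma is essentially a repackaging of that proof. Since $L$ is a real twistor line along which $F_\C$ acts locally freely, I would first normalise the affine coordinate on $L \cong \cp^1$ so that the restriction of the real structure $\gamma$ becomes $z \mapsto \bar z^{-1}$, which is the normalisation carried out in \cite{W09a} and gives item (i). Its fixed circle $|z| = 1$ is the real locus of $L$, and the distinguished points $\pm z_i$ will turn out to lie on it.

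Next I would produce $\psi$. As $\dim_\C F_\C = 2$ exceeds $\dim_\C L = 1$, the orbit of a generic point of $L$ is two-dimensional and meets $L$ again; the construction in \cite{W09a} shows that this second intersection is the image $-z$ of $z$ under the holomorphic involution $z \mapsto -z$, and that there is a locally unique element $\psi(z) \in F_\C$ with $\psi(z).z = -z$ depending holomorphically on $z$. This gives the defining relation in (ii). The map $\psi$ degenerates exactly where $L$ meets the special divisors of $Z$ that are fixed by a one-parameter subgroup of $F_\C$; these intersections are the $2k$ points $\pm z_1, \ldots, \pm z_k$, and the cocharacter fixing the divisor through $z_i$ is the element $v_i \in \Lambda$.

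To obtain the closed form I would compute the divisor of $\psi$. Composing $\psi$ with any character of $F_\C$ produces a meromorphic function on $\cp^1$, so $\psi$ is determined up to a constant in $F_\C$ by its zeros and poles. A local analysis at the degeneration points shows that $\psi$ has a pole of type $v_i$ at $z_i$ and, by the symmetry $z \mapsto -z$ encoded in the defining relation, a zero of type $v_i$ at $-z_i$; hence its divisor is $\sum_i v_i([-z_i] - [z_i])$, which is exactly the divisor of $\prod_i (\frac{z+z_i}{z-z_i})^{v_i}$. The leftover constant is fixed by imposing $\psi(z).z = -z$ together with $\gamma$-compatibility; the same reality constraint forces each $z_i$ onto the fixed circle $|z| = 1$, so that $z_i = e^{i\theta_i}$, and allows them to be ordered as in (iii). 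This establishes (ii) and (iii).

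Finally I would match invariants and apply the classification. The exponents $v_i$ are the local monodromies of $\psi$ about the $F$-fixed orbits $\zeta_i$, and tracking them through the double cover $z \mapsto z^2 =: w$, which identifies $L$ modulo $z \mapsto -z$ with the orbit disc $N = \{|w| \le 1\}$, yields $v_i = u_i - u_{i-1}$ for $i > 1$ and, from the wrap-around across $\theta_1 = 0$, $v_1 = u_1 + u_k$; the same cover sends each pair $\pm z_i$ to $z_i^2 = e^{2i\theta_i} \in \partial N$, the point of $\mc{R}$ attached to $\zeta_i$. Thus the two hypotheses assert precisely that $M$ carries the combinatorial data $\s$ and the conformal data $\mc{R}$. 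Since Theorem A of \cite{W09a} determines a compact anti-self-dual $4$-orbifold with $\chi_{orb} > 0$ and $2$-torus symmetry up to conformal isometry by the pair $(\s, \mc{R})$, it follows that $M$ is conformally equivalent to $\mrs$. I expect the genuine obstacle to be this last monodromy computation, since reconciling the orientation of $\partial N$, the factor of two from the double cover, and the $\gamma$-normalisation is exactly what produces the asymmetric dictionary $v_1 = u_1 + u_k$, $v_i = u_i - u_{i-1}$ with the correct sign at the wrap-around.
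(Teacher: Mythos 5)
Your proposal takes essentially the same route as the paper, which in fact offers no independent argument for this lemma: it is stated there only as a corollary of the proof of Theorem A in \cite{W09a}, and your sketch---normalising the affine coordinate, constructing $\psi$ from the degree-two projection of $L$ to the $F_\C$-orbit space, computing its divisor at the degeneration points, and reading off the data $(\s,\mc{R})$ before invoking the uniqueness half of Theorem A of \cite{W09a}---is a faithful reconstruction of exactly that cited argument. Since the decisive ingredients (existence and holomorphy of $\psi$, the dictionary $v_1 = u_1 + u_k$, $v_i = u_i - u_{i-1}$, and the final classification of such orbifolds by the pair $(\s,\mc{R})$) are delegated to \cite{W09a} both in your write-up and in the paper itself, the two treatments are essentially identical.
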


There is a bijective correspondence between $\s$ and the ordered set $$\mc{T} := \{v_1, \ldots, v_k\} \subset \Lambda$$ that was defined in Lemma \ref{Lemma: Meromorphic classification}. This combinatorial data also arises in the construction of $\mrs$ using the techniques of Joyce in \cite{J95}, which we highlight some details of next.

There is a conformal map between $N$ and the closure of $\mc{H}^2 := \{x + iy : y > 0\}$ in $\cp^1$, which maps $\zeta_i$ to $p_i$. With respect to these coordinates on $N^\circ$ there is a family of functions on \begin{equation}\label{Equation: Elementary Joyce solution}f^{p}(x,y) := \Big((x-p)\big((x-p)^2 + y^2\big)^{-1/2}, y\big((x-p)^2 + y^2\big)^{-1/2}\Big),\end{equation} parameterized by $p \in \R\cup \{\infty\}$. These make up a solution to `Joyce's equation' $$\frac{1}{2} \sum_{i=1}^k f^{p_i}\otimes v_i,$$ which can be used to construct a conformal structure on $\mc{H}^2\times F$. The proof of Theorem 3.3.1 of \cite{J95} shows that this conformal structure is well-defined and compactifies to $\mrs$. This involves showing \begin{equation}\label{Equation: Boundary non-degenacy}\frac{\delta}{y}\;\tn{det}\big(\sum_{i=1}^k f^{p_i} \otimes v_i \big)\neq 0\end{equation} is satisfied on $\mc{H}^2$ and along $\partial \mc{H}^2$, where $\delta.\big((x-p_i)^2 + y^2\big)^{-1/2}$ is bounded and non-zero, for $i = 1,\ldots, k$.

\section{Constructing compact anti-self-dual 4-orbifolds}\label{Section: Quotient}
\subsection{Defining a torus action}\label{Subsection: Torus action}
Let $\s \subset \Lambda$ be the combinatorial data associated with a \emph{simply-connected} anti-self-dual orbifold $\mrs$ in the manner described above. We will use this data to define a torus action on $\HP^{k-1}$ in this subsection.

\begin{TD}\label{Definition: Linear map}
The maximal torus in $Sp(k)$, which acts on the right of $\h^k$, will be denoted by $\hat T^k$. We identify $\hat T^k$ with $\R^k/\Z^k$ in such a way that the subspace of $\R^k$ spanned by $e_i$ is the Lie subalgebra of the $S^1$-subgroup in $\hat T^k$ that acts on the $i^{\tn{th}}$ component of $\h^k$. The quaternionic coordinate $q_i$ on $\h^k$ can be written in terms of complex coordinates with respect to $I_1$ as $q_i := x_i + y_i j$.  We will denote the quotient of $\hat T^k$ with respect to the action of $(-1, \ldots, -1)$ by $T^k$. So, $T^k$ acts effectively on the associated bundle of $\HP^{k-1}$, which we denote by $$\U := \h^k/\{\pm (1, \ldots, 1)\}.$$ There is an $(\R^k)^*$-valued vector field, which we denote by $\mb{X}$, sending $v\in\R^k$ to the vector field on $\U$ corresponding to the infinitesimal action of $v$. In the polar coordinates $(r_i, \theta_i)$ for $x_i$ and $(s_i, \phi_i)$ for $y_i$, this can be written explicitly as
$$\mb{X}_q: \R^k \rightarrow T_q\U; \,\, e_i \mapsto \;\partial_{\theta_i} - \partial_{\phi_i},$$
for $q\in \U$ with $x_i, y_i \neq 0$.

The combinatorial data $\mc{T}$ (associated with $\s$) can be used to define a linear map \begin{equation}\label{Equation: Quotient map}\Omega: \R^k \rightarrow \mf{f}; \;\; e_i \mapsto v_i.\end{equation} We will denote the kernel of $\Omega$ by $\mf{g} \subset \R^k$, and corresponding $(k-2)$-dimensional subtorus of $T^k$ by $G_\s$. We will denote by $A: \mf{g} \hookrightarrow \R^k$ the linear map embedding $\mf{g}$ as a Lie subalgebra of $\R^k$. Then, the $\mf{g}^*$-valued vector field that sends $v\in\mf{g}$ to the vector field on $\U$ corresponding to the infinitesimal action of $v$ is given by $$\mb{A} := \mb{X} \circ A \in\Gamma(\mf{g}^*\otimes T\U).$$ \end{TD}

The lattice in $\mf{f}$ defining $T^k$ is generated by $$\{\frac{1}{2}(e_1 + \ldots + e_k), e_2, \ldots, e_k\}.$$ Recall that $v_1 = u_1 + u_k$ and $v_i = u_i - u_{i-1}$, for $i = 2, \ldots, k$ and so, $$u_i = \frac{1}{2}\sum_{j=1}^i v_j - \frac{1}{2}\sum_{j = i+1}^k v_j.$$ From the definition of $G_\s$ in (\ref{Equation: Quotient map}), it follows that $T^k/G_\s$ corresponds to the lattice generated by $u_1, \ldots, u_k$. Since $\ms$ is simply-connected, $u_1, \ldots, u_k$ generate $\Lambda \subset \mf{f}$ and therefore, $T^k/G_\s = F$.

\subsection{Defining a quaternionic moment map}\label{Subsection: Moment map}
Let $\mc{R}\subset \partial N$ be the conformal data associated with $\mrs$ in the manner described above. Identifying $N$ with $\{z^2 : |z| \leq 1 \}$, we will assume that the $F$-fixed orbits $\zeta_1, \ldots, \zeta_k \in \partial N$ satisfy $|\zeta_i| = 1$ and $$0 = \tn{arg}(\zeta_1) < \tn{arg}(\zeta_2) < \ldots < \tn{arg}(\zeta_k) < 2\pi.$$ In this subsection we will use this data to define a quaternionic moment map for the action of $G_\s$ on $\HP^{k-1}$.

We set $z_i = \zeta_i^{1/2}$ with $0 \leq \tn{arg}(z_i) < \pi$, for $i = 1,\ldots, k$, and define $\bs z \in \C^k$ to have $i^{\tn{th}}$ component $z_i$. Then, we define a vector space $$\mathbb{W} := \langle\{\tn{Re}(\bs z), \tn{Im}(\bs z)\}\rangle\subset (\R^k)^*.$$ Let $B^* : (\R^k)^* \rightarrow \mf{g}^*$ be a linear map with kernel $\mathbb{W}$, and let $B: \mf{g}\hookrightarrow \R^k$ be the dual map. We can associate to $B$ a vector field $$\mb{B} := \mb{X}\circ B \in \Gamma(\mf{g}^*\otimes T\U).$$

Recalling Remark \ref{Remark: Defining momemt map}, we can define a quaternionic moment map $\mu_\mc{R} : \U \rightarrow \mf{g}^*\otimes \tn{Im}\h$ using the metric $h$, induced from the Euclidean metric on $\h^k$, by setting $$I_1\tn{d}\mu_1 = I_2\tn{d}\mu_2 = I_3\tn{d}\mu_3 = \mb{B}^\sharp,$$ where $\,{}^\sharp\,$ denotes contraction with $h$. This moment map will be determined uniquely by requiring that $\mu_\mc{R}$ sends $0\in \U$ to $0\in \mf{g}^*\otimes \tn{Im}\h$. Note that the introduction of $h$ is for the sake of convenience in calculations and exposition; the construction is independent of the choice of metric.

The $S^1$-subgroup generated by $e_m \in \R^k$ corresponds to a (quaternionic-K\"ahler) moment map, which can be written as a $T^k$-invariant twistor function on $\HP^{k-1}$: \begin{equation}\label{Equation: Twistors}\begin{array}{ccc} \nu_m:\U \rightarrow \tn{Im}\h;\\ (x_1 + y_1 j,\ldots,x_k + y_k j) \mapsto (|x_m|^2 - |y_m|^2, \textnormal{Re}(2i\;\!x_m\;\!y_m),  \textnormal{Im}(2i\;\!x_m\;\!y_m)).\end{array}\end{equation} Then, $\mu_{\mc{R}}$ can be written in terms of these twistor functions as \begin{equation}\label{Equation: Moment map}\mu_\mc{R} = B^*.\nu,\end{equation} where $\nu := (\nu_1, \ldots,\nu_k):\U \rightarrow (\R^k)^*\!\otimes \tn{Im}\h$.

By construction $\mu_\mc{R}$ satisfies the holomorphicity condition (\ref{Equation: Holomorphicity}). However, strictly speaking, $\mu_\mc{R}$ is only a quaternionic moment map for the action of $G_\s$ when the transversality condition (\ref{Equation: Transversality}) is satisfied on $P = \mu_\mc{R}^{-1}(0)\backslash\{0\}$.
In the next subsection we verify that this is the case.

\subsection{The transversality condition}\label{Subsection: Transversality}

Since $I_1 \textnormal{d}\mu_1 = \mb{B}^{\sharp}$, the transversality condition (\ref{Equation: Transversality}) is equivalent to the non-degeneracy of the bilinear form on $\mf{g}$ given by $$(u,u') \mapsto h(\mb{A} u, \mb{B} u').$$ Define $B^\bot: \U \times \mf{f} \rightarrow \R^k$ to be the linear map $$\delta.\left(\begin{array}{cc} \tn{Re}(z_1)/|q_1|^2 & \tn{Im}(z_1)/|q_1|^2\\ \vdots & \vdots \\ \tn{Re}(z_k)/|q_k|^2 & \tn{Im}(z_k)/|q_k|^2 \end{array}\right)$$ where $\delta = |q_1|^2\ldots|q_k|^2$, and let $\mb{B}^\bot := \mb{X}\circ B^\bot\in\Gamma(\mf{f}^*\otimes T\U)$. From the definition of $\mb{B}$ we have $$g(\mb{B} u, \mb{B}^\bot w) = 0$$ for every $u \in \mf{g}$ and $w \in \mathbb{W}^*$. Similarly, we can define $\mb{A}^\bot\in\Gamma(\mf{f}^*\otimes T\U)$ using $v_1, \ldots, v_k$ so that $$h(\mb{A} u, \mb{A}^\bot v) = 0,$$ for every $u \in \mf{g}$ and $v \in \mf{f}$. It follows from some linear algebra that the transversality condition is equivalent to the non-degeneracy of the bilinear mapping on $\mf{f}\times\mathbb{W}^*$, given by $$(v,w) \mapsto \delta^{-1} h(\mb{A}^\bot v, \mb{B}^\bot w).$$ Thus, (\ref{Equation: Transversality}) can be written explicitly as \begin{equation}\label{Equation: Transversality two}\delta.\tn{det}\Big(\sum_{i=1}^k \frac{\big(\tn{Re}(z_i), \tn{Im}(z_i)\big)\!\otimes v_i}{|q_i|^2} \Big)\neq 0.\end{equation}

\begin{TL}\label{Lemma: Transversality}
The transversality condition (\ref{Equation: Transversality}) is satisfied on $P = \mu_\mc{R}^{-1}(0)\backslash\{0\}$.
\end{TL}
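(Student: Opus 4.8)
The plan is to prove that throughout the interior of $P$ the left-hand side of (\ref{Equation: Transversality two}) is a strictly positive multiple of Joyce's non-degeneracy quantity (\ref{Equation: Boundary non-degenacy}), which is nonzero by Theorem 3.3.1 of \cite{J95}, and then to treat the boundary of $P$ by a limiting argument. First I would unpack the defining constraint of $P$: since $\mu_\mc{R} = B^*.\nu$ with $\tn{ker}\,B^* = \mathbb{W} = \langle\{\tn{Re}(\bs z),\tn{Im}(\bs z)\}\rangle$, a point lies in $P$ precisely when $\nu\in\mathbb{W}\otimes\tn{Im}\h$. Writing $z_m = e^{i\theta_m}$, this says there exist $\xi,\eta\in\tn{Im}\h$, independent of the index, with
$$\nu_m = \cos\theta_m\,\xi + \sin\theta_m\,\eta,\qquad m = 1,\ldots,k.$$
The explicit expression (\ref{Equation: Twistors}) gives the pointwise identity $|\nu_m| = |q_m|^2$, so on $P$ we have $|q_m|^2 = |\cos\theta_m\,\xi + \sin\theta_m\,\eta|$.

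Next I would read off the orbit-space point directly from $(\xi,\eta)$. The residual $Sp(1)$ rotating $\tn{Im}\h$ and the $\h^*$-scaling of the fibre leave $x := \langle\xi,\eta\rangle/|\xi|^2$ and $y := |\xi|^{-2}\sqrt{|\xi|^2|\eta|^2 - \langle\xi,\eta\rangle^2}$ invariant, and by Cauchy--Schwarz $(x,y)$ lands in $\overline{\mc{H}^2}$, with $y>0$ exactly when $\xi,\eta$ are independent. Normalising the conformal identification $N\cong\overline{\mc{H}^2}$ to the Cayley map $\zeta\mapsto i(1+\zeta)/(1-\zeta)$, so that $\zeta_m = e^{2i\theta_m}\mapsto p_m = -\cot\theta_m$, two short computations complete the dictionary. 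Expanding $|q_m|^4$ in $\cos2\theta_m,\sin2\theta_m$ and comparing coefficients with $C\sin^2\theta_m\big((x-p_m)^2+y^2\big)$, where $C := |\xi|^2$, gives $|q_m|^2 = \sqrt{C}\,\sin\theta_m\sqrt{(x-p_m)^2+y^2}$; and for the index-independent matrix $S = \big(\begin{smallmatrix}1 & x\\0 & y\end{smallmatrix}\big)$ one has $S(\cos\theta_m,\sin\theta_m) = \sin\theta_m\sqrt{(x-p_m)^2+y^2}\,f^{p_m}(x,y)$, with $f^{p_m}$ as in (\ref{Equation: Elementary Joyce solution}). Combining these yields the key relation
$$\frac{(\tn{Re}(z_m),\tn{Im}(z_m))}{|q_m|^2} = \frac{1}{\sqrt C}\,S^{-1}f^{p_m}(x,y),\qquad m = 1,\ldots,k.$$

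The interior case is then immediate. Summing the previous relation against $v_m$, so that $S^{-1}$ factors out of the $\mathbb{W}^*$-slot, and using $\tn{det}\,S^{-1} = 1/y$,
$$\tn{det}\Big(\sum_{m}\frac{(\tn{Re}(z_m),\tn{Im}(z_m))\otimes v_m}{|q_m|^2}\Big) = \frac{1}{Cy}\,\tn{det}\Big(\sum_m f^{p_m}\otimes v_m\Big).$$
Multiplying by $\delta = |q_1|^2\cdots|q_k|^2 > 0$ exhibits the left-hand side of (\ref{Equation: Transversality two}) as $\frac{\delta}{Cy}\tn{det}(\sum_m f^{p_m}\otimes v_m)$, a strictly positive multiple of (\ref{Equation: Boundary non-degenacy}); it is therefore nonzero wherever $y>0$.

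The main obstacle is the boundary locus $y\to0$ (the image of $\partial N$), where $S$ becomes singular and the dictionary above degenerates. This is precisely the boundary $\partial\mc{H}^2$ appearing in Theorem 3.3.1 of \cite{J95}: there $\tn{det}(\sum_m f^{p_m}\otimes v_m)$ vanishes to first order in $y$, since all the $f^{p_m}$ become parallel, while $\delta$ stays finite and nonzero at boundary points with $x\neq p_m$ and, at the vertices $x = p_m$ where $q_m\to0$, degenerates at exactly the rate that keeps $\delta\big((x-p_m)^2+y^2\big)^{-1/2}$ bounded and nonzero. I would finish by regrouping the product in (\ref{Equation: Transversality two}) so that these competing zeros and poles cancel, reducing the boundary non-vanishing to the statement already established in \cite{J95}; checking that the vanishing rates match at the vertices is the delicate point.
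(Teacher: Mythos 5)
Your argument is essentially the paper's proof: both reduce by the $T^k\times CO(3)$-action to the orbit space $\overline{\mc{H}^2}$, use $|\nu_m| = |q_m|^2$ to get $|q_m|^2 = \sqrt{C}\,\sin\theta_m\big((x-p_m)^2+y^2\big)^{1/2}$ (the paper writes this as $|q_i|^2 = |\tn{Re}(z_i)\mb{x} + \tn{Im}(z_i)\mb{y}|$ after a $CO(3)$-normalization, your invariant $(x,y)$ being the same reduction up to the sign convention $p_m = -\cot\theta_m$), and rewrite (\ref{Equation: Transversality two}) as a non-zero multiple of Joyce's non-degeneracy condition (\ref{Equation: Boundary non-degenacy}), which holds on $\mc{H}^2$ \emph{and along} $\partial\mc{H}^2$ because $\s$ is convex. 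The boundary step you defer as ``the delicate point'' is already packaged into (\ref{Equation: Boundary non-degenacy}): its hypothesis asks only that $\delta\big((x-p_i)^2+y^2\big)^{-1/2}$ be bounded and non-zero, which your own formula for $|q_m|^2$ yields at once (since the $p_j$ are distinct, $\delta\big((x-p_i)^2+y^2\big)^{-1/2}$ is a non-zero multiple of $\prod_{j\neq i}\big((x-p_j)^2+y^2\big)^{1/2}$), so no separate limiting argument at edges or vertices is needed and the paper accordingly invokes (\ref{Equation: Boundary non-degenacy}) once for all of $P$.
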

\begin{proof}
If (\ref{Equation: Transversality}) is satisfied at $q \in P$, then it is satisfied on the $T^k \times CO(3)$-orbit of $q$. It follows from equation (\ref{Equation: Moment map}) that $q\in P$ precisely when $\nu(q)$ defines a non-zero vector in $\mathbb{W}\otimes \tn{Im}\h$. By the definition of $\mathbb{W}$, such a vector can be written as $$\tn{Re}(\bs z)\!\otimes \mb{x} + \tn{Im}(\bs z)\!\otimes\mb{y},$$ for $\bs x, \bs y \in \tn{Im}\h$. Therefore, the quotient of $P$ by the action of $T^k$ can be identified with $(\mathbb{W}\otimes \tn{Im}\h) \backslash \{0\}$. The action of $CO(3)$ on $\U$ induces an action of $CO(3)$ on this set and, when $\bs x \neq 0$, a $CO(3)$-orbit contains a unique point with $\bs x = (1,0,0)$ and $\bs y = (-x, y, 0)$, for $y \geq 0$. It follows that there is a bijective correspondence between the $T^k \times CO(3)$ orbits in $P$ and the closure of $\mc{H}^2$ in $\cp^1$, where $\infty$ is identified with the orbit corresponding to $\bs x = 0$.

Note that this quotient of $P$ is the $F$-orbit space of the quaternionic quotient we are constructing, which (as we shall see later) is $\mrs$. We now reduce (\ref{Equation: Transversality}) to a condition on this orbit space, and show that it is equivalent to the restrictions imposed on the solution of Joyce's equation noted in Subsection \ref{Subsection: Definite Orbifolds}. So, we restrict our attention to the points singled out above in each $T^k \times CO(3)$-orbit. A simple calculation using equation (\ref{Equation: Twistors}) shows that, if $\nu(q) = \tn{Re}(\bs z)\!\otimes \mb{x} + \tn{Im}(\bs z)\!\otimes\mb{y}$, then $$|q_i|^2 = |\tn{Re}(z_i) \mb{x} +\tn{Im}(z_i) \mb{y}|,$$ for $i = 1,\ldots, k$. If we set $p_i = \tn{Re}(z_i)/\tn{Im}(z_i)$, then, after a little rearrangement, (\ref{Equation: Transversality two}) is equivalent to $$\delta.\tn{det}\Big(\sum_{i=1}^k \big(\tn{Re}(f^{p_i}), \tn{Im}(f^{p_i})\big)\otimes v_i \Big)\neq 0,$$ where $f^{p}$ was defined in (\ref{Equation: Elementary Joyce solution}). Therefore, this expression is equivalent to (\ref{Equation: Boundary non-degenacy}) for the conformal data defined by $\infty = p_1 > p_2 > \ldots > p_k$, and (\ref{Equation: Boundary non-degenacy}) is satisfied because $\s$ is convex.
\end{proof}

\subsection{The quaternionic quotient}\label{Subsection: Quaternionic quotient}
Recall from Proposition \ref{Proposition: Quaternionic quotient} that the quaternionic quotient of $\HP^{k-1}$ with respect to $G_\s$ is defined as $$M : = R/G_\s,$$ where $$R = \{[q]\in\HP^{k-1}: q \in P\}.$$
The second part of the next lemma proves that $R/G_\s$ is an orbifold.
\begin{TL}\label{Lemma: Locally Free}
The action of $T^k$ at $[q] \in R$ is locally free if $\nu(q)$ cannot be written as $$\bs w\otimes\mb{x}  \in \mathbb{W}\otimes \tn{Im}\h.$$ Furthermore, the action of $G_\s$ on $R$ is locally free.
\end{TL}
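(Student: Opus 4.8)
The plan is to compute the infinitesimal stabiliser of the $T^k$-action at a point of $R$ and to detect local freeness from the collinearity of the twistor functions $\nu_i$. An element $\eta=(\eta_1,\dots,\eta_k)\in\R^k=\tn{Lie}(T^k)$ fixes $[q]\in\HP^{k-1}$ if and only if the flow $q_i\mapsto q_i\,e^{\eta_i i t}$ it generates by right multiplication stays in the left $\h^*$-orbit of $q$; differentiating, there must be a single $\xi\in\h$ with $q_i(\eta_i i)=\xi q_i$ for every $i$ with $q_i\neq 0$, that is, $\xi=\eta_i\,q_i i q_i^{-1}$ must be independent of $i$. A direct computation gives $q_i i\bar q_i=(\nu_i^1,-\nu_i^2,-\nu_i^3)$ under the identification $\tn{Im}\,\h\cong\R^3$, so $q_i i q_i^{-1}=q_i i\bar q_i/|q_i|^2$ is the unit vector collinear with $\nu_i(q)$. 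Hence the stabiliser Lie algebra $\mf{s}([q])\subset\R^k$ consists of those $\eta$ for which $\eta_i\,q_i i q_i^{-1}$ does not depend on $i$, the indices with $q_i=0$ being unconstrained; and since local freeness of $T^k$ (resp. $G_\s$) at $[q]$ is equivalent to $\mf{s}([q])=0$ (resp. $\mf{g}\cap\mf{s}([q])=0$), everything reduces to understanding $\mf{s}([q])$.

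For the first assertion, recall from the proof of Lemma \ref{Lemma: Transversality} that $q\in P$ forces $\nu(q)=\tn{Re}(\bs z)\otimes\mb{x}+\tn{Im}(\bs z)\otimes\mb{y}$, so $\nu_i(q)=\tn{Re}(z_i)\,\mb{x}+\tn{Im}(z_i)\,\mb{y}$. The hypothesis that $\nu(q)$ is not of the form $\bs{w}\otimes\mb{x}$ is precisely the linear independence of $\mb{x}$ and $\mb{y}$. As no $z_i$ vanishes, each $\nu_i(q)$ is then nonzero (so every $q_i\neq 0$) and the $\nu_i(q)$ are not all collinear; since a nonzero $\eta\in\mf{s}([q])$ would force $\xi\neq 0$ and all $\nu_i(q)$ collinear with $\xi$, we conclude $\mf{s}([q])=0$. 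Thus $T^k$ acts locally freely at such points, and a fortiori so does $G_\s$.

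It remains to handle $[q]\in R$ at which $\nu(q)=\bs{c}\otimes\mb{u}$ is decomposable, with $\bs{c}\in\mathbb{W}$ and $\mb{u}\neq 0$. Here $\nu_i(q)=c_i\,\mb{u}$, and writing $\bs{c}=s\,\tn{Re}(\bs z)+t\,\tn{Im}(\bs z)$ we have $c_i=\tn{Re}(w z_i)$ for the fixed $w=s-it$. Where $c_i\neq 0$, collinearity forces $\eta_i\,\tn{sgn}(c_i)$ to be a common constant; where $c_i=0$, equivalently $q_i=0$, the coordinate $\eta_i$ is free. Because $\theta_i=\arg(z_i)$ satisfy $0=\theta_1<\dots<\theta_k<\pi$, the function $c_i=|w|\cos(\theta_i+\arg w)$ changes sign at most once and vanishes for at most one index $i_0$; these vanishing points correspond to the $F$-fixed orbits, i.e. the parameters $p_i$ appearing in the proof of Lemma \ref{Lemma: Transversality}. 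Consequently $\mf{s}([q])$ is spanned by the sign vector $\eta^*:=(\tn{sgn}(c_1),\dots,\tn{sgn}(c_k))$, together with $e_{i_0}$ when some $c_{i_0}=0$.

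It then suffices to check that $\Omega$ is injective on $\mf{s}([q])$. Using the identity $2u_i=\sum_{j\le i}v_j-\sum_{j>i}v_j$ from Subsection \ref{Subsection: Torus action}, a telescoping computation gives $\Omega(\eta^*)=\pm 2u_m\neq 0$ at a point whose single sign change falls after position $m$, so that $\mf{g}\cap\R\eta^*=0$; and at a fixed point $i_0$ the vectors $\Omega(\eta^*)$ and $\Omega(e_{i_0})=v_{i_0}$ together span $\langle u_{i_0-1},u_{i_0}\rangle$, indices read cyclically with $u_0=u_k$. Since consecutive elements of $\s$ are linearly independent --- part of the definition of the combinatorial data --- each $u_i$ is nonzero and each cyclically-consecutive pair is a basis of its span, whence $\Omega|_{\mf{s}([q])}$ is injective for every $[q]\in R$. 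I expect the decomposable case to be the crux: organising the sign pattern of the $c_i$ and, above all, verifying injectivity at the $F$-fixed points, where the linear independence of consecutive $u_i$ is exactly the input that makes the argument work.
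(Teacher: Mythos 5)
Your proposal is correct and is essentially the paper's own argument in infinitesimal form: your collinearity criterion $\xi=\eta_i\,q_i i q_i^{-1}$ is exactly the linearization of the paper's normalization ($q_1=1$, $c=\lambda^{d_1}$, with the trichotomy $q_i\in\C$, $q_i\in\C j$, or $q_i=0$ corresponding to your sign pattern $\eta^*$ and free coordinate $e_{i_0}$), and both proofs then show non-locally-free points force $\nu(q)$ decomposable and conclude via the same telescoping identity $2u_m=\sum_{j\le m}v_j-\sum_{j>m}v_j$, reducing to the linear independence of (cyclically) consecutive $u_{m-1},u_m$. No gaps.
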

\begin{proof}
Define $G_{\mb{d}}\subset T^k$ to be the $S^1$-subgroup of $G_\s$ acting on $\HP^{k-1}$ as
$$\lambda: [q_1: \ldots: q_k] \mapsto [q_1.\lambda^{d_1}: \ldots: q_k.\lambda^{d_k}],$$
for some non-zero vector $\mb{d} = (d_1, \ldots, d_k)\in \Z^k$.
Suppose $[q] \in R$ is fixed by $G_\mb{d}$. Then, for every $\lambda \in U(1)$, there exists some $c\in CO(3)$ such that
$$(c.q_1,\ldots, c.q_k) = (q_1.\lambda^{d_1}, \ldots, q_k.\lambda^{d_k}) \in \U.$$
We will assume that $q_1 \neq 0$. Then, using the action of $CO(3)$ we may set $q_1 = 1$, in which case $c = \lambda^{d_1}$. (If $q_1 = 0$, then we can set some other $q_i = 1$, and the argument proceeds similarly.) Consequently, for $i=2,\ldots,k$, either \begin{itemize}\item{$q_i \in \C$ and $d_i = d_1$,} \item{$-q_ij\in \C$ and $d_i = -d_1$,} \item{or $q_i = 0$.}\end{itemize} Recall that $\nu(q)$ defines a non-zero vector in $\mathbb{W}\otimes \tn{Im}\h$ when $q\in P$. So, the conditions imposed on $q$ imply that $\nu(q) = \bs w\otimes (1,0,0)$, for some $\bs w \in \mathbb{W}$. Moreover, the induced action of $CO(3)$ sends $\bs w\otimes (1,0,0)$ to $\bs w\otimes\mb{x}$, for some non-zero $x \in \tn{Im}\h$. This completes the proof of the first statement in the lemma.

Now suppose that $[q]$ is fixed by an $S^1$-subgroup of $G_\s$. From our definition of $G_\s$ it follows that $G_\mb{d}\subset G_\s$ precisely when
\begin{equation}\label{Equation: Circle subgroup}\sum_{i=1}^k d_i v_i = 0.\end{equation} The condition  $\nu(q) = \bs w\otimes (1,0,0)$, along with the definition of $\mathbb{W}$ in Subsection \ref{Subsection: Moment map}, implies that there is some $m\in \{2,\ldots, k\}$ such that
$$[q_1: \ldots : q_k] = [1:x_2:\ldots:x_m:y_{m+1}j:\ldots:y_kj],$$
where $x_m\in \C$ and otherwise the entries are in $\C^*$.
Thus, we find that $\mb{d}$ takes the form
$$(d_1, \ldots, d_1, d_m, -d_1, \ldots, -d_1).$$
If we substitute $\mb{d}$ into equation (\ref{Equation: Circle subgroup}), it follows that $$(d_1 + d_m)u_m + (d_1 - d_m)u_{m-1} = 0,$$ since $v_1 = u_1 + u_k$ and $v_i = u_i - u_{i-1}$, for $i = 1,\ldots, k$.
Therefore, $u_m$ and $u_{m-1}$ are linearly dependent. This gives a contradiction, as noted in Subsection \ref{Subsection: Definite Orbifolds}. \end{proof}

From Lemma \ref{Lemma: Transversality} and Lemma \ref{Lemma: Locally Free} we see that $M$ is a quaternionic orbifold, and a dimension count shows us that $M$ is 4 dimensional. Also $M$ is compact, since $\HP^{k-1}$ is compact. The twistor functions making up the components of $\mu_\mc{R}$ are $T^k$-invariant and so, this quaternionic quotient construction is $T^k$-invariant. Since the action of $T^k$ preserves the quaternionic structure on $\HP^{k-1}$, it follows that the (effective) action of $F = T^k/G$ on $M$ preserves the quaternionic structure. Therefore, $M$ is a compact anti-self-dual 4-orbifold whose isometry group contains $F$.

Finally, we need to verify that $\chi_{orb}(M) > 0$ in order to show that $M$ belongs to the class we are concerned with. By the Poincar\'e-Hopf Theorem for orbifolds, $\chi_{orb}(M) > 0$ is equivalent to the existence of $F$-fixed points (see for example Subsection 2.2 of \cite{W09a}). The existence of such points is clear: the sets $\{[q]\in R: q_i = 0\}\subset \HP^{k-1}$ are mapped to fixed points of the $F$-action in $M$ under the quotient with respect to $G_\s$.

\begin{TP}\label{Proposition: ASD orbifold}
Suppose the anti-self-dual orbifold $\mrs$ is simply-connected. Then the quaternionic quotient of $\HP^{k-1}$ with respect to $G_\s$ and $\mu_\mc{R}$ is a compact anti-self-dual 4-orbifold $M$ with $\chi_{orb}(M) > 0$. Furthermore, $F$ acts effectively on $M$ and preserves the conformal structure. \end{TP}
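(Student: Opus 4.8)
The plan is to read Proposition~\ref{Proposition: ASD orbifold} as the capstone that assembles the two preceding lemmas through Joyce's quotient construction; the genuine analytic work has already been carried out, so the proof itself is largely bookkeeping. First I would invoke Proposition~\ref{Proposition: Quaternionic quotient}: Lemma~\ref{Lemma: Transversality} supplies the transversality condition (\ref{Equation: Transversality}) on $P = \mu_\mc{R}^{-1}(0)\backslash\{0\}$, so that $\mu_\mc{R}$ is a bona fide quaternionic moment map for $G_\s$, and Lemma~\ref{Lemma: Locally Free} supplies the locally free action of $G_\s$ on $R$. Joyce's proposition then yields at once that $M = R/G_\s$ carries a natural quaternionic structure with twistor space $Q/G_\s$ and associated bundle $P/G_\s$. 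To pin down that this is an anti-self-dual $4$-orbifold I would run a dimension count: the transversality condition makes $P$ a smooth submanifold of $\U$ of codimension $3(k-2)$, hence $\dim P = 4k - 3(k-2) = k+6$; subtracting the four dimensions of the $\pi_1$-fibre gives $\dim R = k+2$, and quotienting by the $(k-2)$-dimensional group $G_\s$ leaves $\dim M = 4$. Since in real dimension four a quaternionic structure is precisely an anti-self-dual conformal structure, $M$ is anti-self-dual, and it is compact because $R$ is closed inside the compact space $\HP^{k-1}$.

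Next I would record the torus symmetry. Each component twistor function $\nu_m$ of (\ref{Equation: Twistors}) is $T^k$-invariant, so $\mu_\mc{R} = B^{*}.\nu$ of (\ref{Equation: Moment map}) is $T^k$-invariant, whence $P$, $Q$ and $R$ are $T^k$-invariant. Therefore $T^k$ descends to $M$, and because $G_\s$ acts trivially on the quotient the residual group $F = T^k/G_\s$ acts on $M$; this action preserves the conformal structure because the $T^k$-action on $\HP^{k-1}$ preserves its quaternionic structure. For effectiveness I would combine the identification $T^k/G_\s = F$ of Subsection~\ref{Subsection: Torus action}, which gives $F$ the correct dimension and lattice, with the generic triviality of stabilizers extracted from the proof of Lemma~\ref{Lemma: Locally Free}, so as to conclude that $G_\s$ is exactly the subgroup of $T^k$ acting trivially on $M$ and hence that $F$ acts effectively.

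The remaining, and only genuinely substantive, point is the positivity of the orbifold Euler characteristic. I would apply the orbifold Poincar\'e--Hopf theorem (Subsection~2.2 of \cite{W09a}), which reduces $\chi_{orb}(M) > 0$ to the existence of an $F$-fixed point. The natural candidates are the images of the coordinate loci $\{[q]\in R : q_i = 0\}$, which are $T^k$-invariant and so descend to $F$-fixed orbits. The point needing care is to verify that at least one such locus meets $R = \pi_1(P)$ nontrivially: this amounts to solving $\tn{Re}(z_i)\,\mb{x} + \tn{Im}(z_i)\,\mb{y} = 0$ for a nonzero value of $\nu$ in $\mathbb{W}\otimes\tn{Im}\h$ with vanishing $i$-th twistor component, which the surjectivity of the individual maps $\nu_m$ makes possible, and then to check that the resulting orbit is a true fixed orbit rather than a merely low-dimensional one. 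I expect this $\chi_{orb} > 0$ step, the four-dimensional echo of the boundary corners $z_i = \zeta_i^{1/2}$ of the conformal data $\mc{R}$, to be the main obstacle of the argument, modest though it is by comparison with the transversality computation already settled in Lemma~\ref{Lemma: Transversality}.
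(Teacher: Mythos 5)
Your proposal is correct and takes essentially the same route as the paper: assembling Lemma \ref{Lemma: Transversality} and Lemma \ref{Lemma: Locally Free} via Joyce's Proposition \ref{Proposition: Quaternionic quotient}, making the dimension count explicit, deriving the $F$-action from the $T^k$-invariance of $\mu_\mc{R}$, and establishing $\chi_{orb}(M)>0$ by the orbifold Poincar\'e--Hopf theorem using the loci $\{[q]\in R : q_i = 0\}$ as $F$-fixed points. The only divergence is one of emphasis --- the paper treats the nonemptiness of these fixed loci as clear, while you flag it as the substantive step; your sketch of it (solving $\tn{Re}(z_i)\mb{x} + \tn{Im}(z_i)\mb{y} = 0$ within $\mathbb{W}\otimes\tn{Im}\h\backslash\{0\}$) is consistent with the identification $|q_i|^2 = |\tn{Re}(z_i)\mb{x} + \tn{Im}(z_i)\mb{y}|$ already established in the proof of Lemma \ref{Lemma: Transversality}.
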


\section{The proofs of Theorems A and B}\label{Section: Proofs of Theorem A and B}
\subsection{A meromorphic function in higher dimensions}\label{Subsection: Higher dim meromorphic}
In order to prove Theorem A we now look at an example of a meromorphic function on a twistor line in $\cp^{2k-1}$, which is defined similarly to $\psi$ in Lemma \ref{Lemma: Meromorphic classification}. Recall from Proposition \ref{Proposition: Quaternionic quotient} that the twistor space of $M$ is given by $$Z := Q/G_\s,$$ where $$Q := \{[q]\in\cp^{2k-1}: q\in P\subset \U\}.$$ Using equations (\ref{Equation: Twistors}) and (\ref{Equation: Moment map}), $$\nu(q) = 2(0, -\tn{Im}(\bs z) , \tn{Re}(\bs z)),$$ when the components of $q$ are determined by $x_j = 1$, $y_j = z_j$. By the definition of $P$ in Subsection \ref{Subsection: Moment map}, $q \in P$, since $\nu(q) \in \mathbb{W}\otimes \tn{Im}\h$. Therefore, $[q]\in Q$.

Let $\ell$ be the twistor line in $Q\subset \cp^{2k-1}$ that contains $[q]$. Then, by Lemma \ref{Lemma: Locally Free}, the action of $T^k_\C$ is locally free about $\ell$. Recalling the description of a twistor space in Subsection \ref{Subsection: Hypercomplex and quaternionic} it follows that $\ell$ consist of the points $$[x_1x - \bar{y}_1 y: y_1 x + \bar{x}_1 y: \ldots : x_k x - \bar{y}_k y: y_k x + \bar{x}_k y],$$ for $[x:y]\in\cp^1$. We will use the coordinate $z = y/x$ on $\ell$, and note that with this coordinate the real structure, which is induced from multiplication by $j$, is the antipodal map. Then, we define $$\ell' : = \ell - \{\pm z_1,\ldots, \pm z_k\}.$$

The action of $T^k$ on $\U$ descends to an action of $T^k$ on $\cp^{2k-1}$. This action extends to an action of the complex torus $T^k_\C$ in the obvious way
$$(\lambda_1, \ldots, \lambda_k): [x_1: y_1:\ldots:x_k: y_k] \mapsto [\lambda_1 x_1: \lambda_1^{-1}y_1:\ldots:\lambda_k x_k: \lambda_k^{-1} y_k],$$ where $(\lambda_1, \ldots, \lambda_k)$ is identified with $(-\lambda_1, \ldots,-\lambda_k)$ in $T^k_\C$. An elementary calculation then verifies the next lemma.

\begin{TL}\label{Lemma: Chi}
There is a non-trivial $T^k_{\C}$-valued holomorphic function on $\ell'$, which we will denote by $\Psi$, satisfying $\Psi(z).z = -z \in \ell'$. This function can be written as $$\Psi: z \mapsto \pm \Big(\frac{z + z_1}{z - z_1}, \ldots, \frac{z + z_k}{z-z_k} \Big).$$
\end{TL}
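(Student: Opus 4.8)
The plan is to verify the lemma by a direct computation on the line $\ell$: parametrise $\ell$ by $z$, apply a general element of $T^k_\C$ to the point at parameter $z$, and solve for the element carrying it to the point at parameter $-z$. First I would record the relevant points. Setting $x = 1$ and $y = z$ in the parametrisation of $\ell$ above, and using that $z_j = \zeta_j^{1/2}$ has unit modulus so that $\bar z_j = z_j^{-1}$, the point at parameter $z$ has $j$-th homogeneous pair $(1 - \bar z_j z,\, z_j + z)$, the point at parameter $-z$ has $j$-th pair $(1 + \bar z_j z,\, z_j - z)$, and an element $(\lambda_1, \ldots, \lambda_k) \in T^k_\C$ sends the former to $(\lambda_j(1 - \bar z_j z),\, \lambda_j^{-1}(z_j + z))$.

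Next I would impose $\Psi(z).z = -z$ as a projective identity in $\cp^{2k-1}$: there must be a single nonzero scalar $c$, the same for all $j$, with $\lambda_j(1 - \bar z_j z) = c(1 + \bar z_j z)$ and $\lambda_j^{-1}(z_j + z) = c(z_j - z)$. Substituting $\bar z_j = z_j^{-1}$ turns $1 \mp \bar z_j z$ into $(z_j \mp z)/z_j$, so the two equations collapse to $\lambda_j = c\,(z_j + z)/(z_j - z)$ and $\lambda_j = c^{-1}(z_j + z)/(z_j - z)$ respectively. Consistency forces $c^2 = 1$, hence $c = \pm 1$ and $\lambda_j = \pm (z_j + z)/(z_j - z)$ with one common sign; since $(z_j + z)/(z_j - z) = -(z + z_j)/(z - z_j)$ and $(\lambda_1, \ldots, \lambda_k)$ is identified with $(-\lambda_1, \ldots, -\lambda_k)$ in $T^k_\C$, this is exactly the asserted $\Psi(z) = \pm(\frac{z+z_1}{z-z_1}, \ldots, \frac{z+z_k}{z-z_k})$. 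To finish I would note that each component $(z+z_j)/(z-z_j)$ is holomorphic and nonvanishing away from its pole $z = z_j$ and its zero $z = -z_j$, so $\Psi$ is a genuine $T^k_\C$-valued holomorphic function precisely on $\ell' = \ell - \{\pm z_1, \ldots, \pm z_k\}$, and it is manifestly non-trivial.

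The step I expect to be the crux is the consistency of the single scalar $c$ across all $k$ coordinate pairs. A priori the $2k$ scalar equations over-determine the $k+1$ unknowns $\lambda_1, \ldots, \lambda_k, c$, and without further input there would be no torus element mapping the point at parameter $z$ back onto $\ell$ at all. What saves the computation is exactly the unit-modulus condition $|z_j| = 1$, inherited from the $\zeta_j$ lying on the unit circle: it makes both halves of each coordinate pair demand the same ratio $(z_j + z)/(z_j - z)$, so that the only surviving constraint is $c^2 = 1$. Everything else is routine bookkeeping, and the \emph{elementary calculation} referred to in the statement is precisely this verification.
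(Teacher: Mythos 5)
Your computation is correct and is precisely the ``elementary calculation'' the paper invokes without writing out: the paper offers no further proof of Lemma \ref{Lemma: Chi}, and your verification --- solving $\lambda_j(1-\bar z_j z) = c(1+\bar z_j z)$, $\lambda_j^{-1}(z_j+z) = c(z_j-z)$ using $|z_j|=1$ to force $c^2=1$ and $\lambda_j = \pm(z+z_j)/(z-z_j)$, with the residual sign absorbed by the identification $(\lambda_i)\sim(-\lambda_i)$ in $T^k_\C$ --- is exactly the intended argument. You also correctly identify the role of the unit-modulus condition $\bar z_j = z_j^{-1}$ (inherited from $|\zeta_j|=1$ in Subsection \ref{Subsection: Moment map}) in making the over-determined system consistent, so there is nothing to add.
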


\subsection{The proof of Theorem A}\label{Subsection: Theorem A}
The next lemma completes the proof of Proposition \ref{Proposition: Theorem A} and therefore, Theorem A.

\begin{TL}
There is a conformal isometry between $M$ and $M_{\s,\mc{R}}$.
\end{TL}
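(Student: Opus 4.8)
The plan is to invoke Lemma~\ref{Lemma: Meromorphic classification} as the final bridge: I will verify that the twistor space $Z = Q/G_\s$ of the orbifold $M$ constructed in Proposition~\ref{Proposition: ASD orbifold} carries, on a suitable real twistor line, precisely the meromorphic data that the classification lemma requires in order to conclude that $M$ is conformally equivalent to $\mrs$. Since Proposition~\ref{Proposition: ASD orbifold} already establishes that $M$ is a compact anti-self-dual $4$-orbifold with $\chi_{orb}(M) > 0$ admitting an effective conformal action of $F$, the hypotheses of Lemma~\ref{Lemma: Meromorphic classification} are met, and it suffices to produce the affine coordinate $z$ on a real twistor line $L \subset Z$ together with the $F_\C$-valued function $\psi$ and the points $z_j$ satisfying conditions (i)--(iii), and then to match $\mc{T}$ and $\mc{R}$.

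First I would take the twistor line $\ell \subset Q \subset \cp^{2k-1}$ constructed in Subsection~\ref{Subsection: Higher dim meromorphic} and push it down to a real twistor line $L = \ell/G_\s$ in $Z$. The coordinate $z = y/x$ on $\ell$ descends to an affine coordinate on $L$, and I would check that the real structure, which is the antipodal map $z \mapsto -z$ on $\ell$ (induced from multiplication by $j$), agrees with condition (i) after the standard reparametrization identifying the antipodal map with $z \mapsto \bar z^{-1}$; this is the normalization choice used implicitly when one fixes $z_j = e^{i\theta_j}$ on the round $\cp^1$. Next, the function $\Psi: \ell' \to T^k_\C$ of Lemma~\ref{Lemma: Chi} satisfies $\Psi(z).z = -z$, so its composition with the quotient projection $\Omega: \R^k \to \mf{f}$, i.e.\ $\psi := \Omega \circ \Psi$, descends to an $F_\C$-valued holomorphic function on $L - \{\pm z_1, \ldots, \pm z_k\}$ that still satisfies $\psi(z).z = -z$ in $L$. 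Writing $\Psi$ componentwise as $\prod_i \big(\tfrac{z+z_i}{z-z_i}\big)^{e_i}$ and applying $\Omega$ (which sends $e_i \mapsto v_i$) yields exactly $\psi(z) = \prod_{i=1}^k \big(\tfrac{z+z_i}{z-z_i}\big)^{v_i}$, so condition (ii) holds with the required $v_i$. Condition (iii), namely $z_j = e^{i\theta_j}$ with $0 = \theta_1 < \cdots < \theta_k < \pi$, follows from the choice $z_i = \zeta_i^{1/2}$ with $0 \le \arg(z_i) < \pi$ made in Subsection~\ref{Subsection: Moment map}, together with the ordering $0 = \arg(\zeta_1) < \cdots < \arg(\zeta_k) < 2\pi$.

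It then remains to check the two matching hypotheses of Lemma~\ref{Lemma: Meromorphic classification}. The relation $v_1 = u_1 + u_k$, $v_i = u_i - u_{i-1}$ holds by the very definition of $\mc{T}$ in terms of $\s$, so the combinatorial data agree. For the conformal data, I would verify that the orbit-space identification established in the proof of Lemma~\ref{Lemma: Transversality} --- which identifies the $F$-orbit space of $M$ with the closure of $\mc{H}^2$ in $\cp^1$ and carries the $F$-fixed orbits to the points $\zeta_i$ --- is precisely a conformal isometry sending the ordered set $\{z_1^2, \ldots, z_k^2\} = \{\zeta_1, \ldots, \zeta_k\}$ to $\mc{R} \subset N$. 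Here one uses that $N$ was identified with $\{z^2 : |z| \le 1\}$ and that the $\zeta_i$ were chosen to be exactly the conformal data $\mc{R}$. With both hypotheses verified, Lemma~\ref{Lemma: Meromorphic classification} yields the conformal equivalence $M \cong \mrs$.

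I expect the main obstacle to be the bookkeeping around the various coordinate normalizations and the $\{\pm 1\}$ ambiguities, rather than any deep geometric difficulty: one must confirm that the real structure on $L$ genuinely takes the form of condition (i) in the chosen coordinate, that the sign ambiguity in $\Psi$ (the $\pm$ in Lemma~\ref{Lemma: Chi}) and the identification $(\lambda_1, \ldots, \lambda_k) \sim (-\lambda_1, \ldots, -\lambda_k)$ in $T^k_\C$ descend consistently to a well-defined $F_\C$-valued $\psi$ on the quotient, and that the squaring map relating $z_i$ to $\zeta_i$ is compatible with both the real-structure normalization and the conformal identification of the orbit space with $N$. Once these normalizations are pinned down, the conclusion is immediate from the classification lemma, and Theorem~B then follows from Theorem~A by the quaternionic-complex quotient construction of Corollary~\ref{Corollary: Quaternionic-complex quotient} as outlined in the introduction.
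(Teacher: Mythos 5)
Your proposal follows the paper's proof of this lemma essentially step for step: invoke Proposition~\ref{Proposition: ASD orbifold} for the hypotheses of Lemma~\ref{Lemma: Meromorphic classification}, push $\ell$ down to a twistor line $L \subset Z = Q/G_\s$ with its induced coordinate, descend $\Psi$ from Lemma~\ref{Lemma: Chi} through the quotient map $\Omega$ to get $\psi(z) = \prod_i \big(\tfrac{z+z_i}{z-z_i}\big)^{v_i}$ for condition (ii), read off condition (iii) from the choice $z_i = \zeta_i^{1/2}$, and match $\mc{T}$ with $\s$ and $\{z_1^2,\ldots,z_k^2\}$ with $\mc{R}$. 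All of that agrees with the paper.

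The one step where your argument as written is wrong is the verification of condition (i). You describe the real structure on $\ell$ as ``the antipodal map $z \mapsto -z$'': but $z \mapsto -z$ is holomorphic, so it is not a real structure at all; the antipodal map (which is what multiplication by $j$ actually induces on $\ell$, as the paper states in Subsection~\ref{Subsection: Higher dim meromorphic}) is $z \mapsto -\bar z^{-1}$. More seriously, your proposed fix --- a ``standard reparametrization identifying the antipodal map with $z \mapsto \bar z^{-1}$'' --- cannot exist: an affine reparametrization acts by M\"obius conjugation, which preserves the fixed-point set, and the antipodal map is fixed-point free while $z \mapsto \bar z^{-1}$ fixes the entire unit circle $|z| = 1$. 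The paper does not reparametrize; it asserts that in the coordinate induced from $\ell$ the real structure on $L$ restricts to $z \mapsto \bar z^{-1}$, the point being that the discrepancy $z \leftrightarrow -z$ between the antipodal map and $z \mapsto \bar z^{-1}$ is absorbed by the $F_\C$-action itself, via the function $\psi$ of condition (ii) satisfying $\psi(z).z = -z$ --- this is exactly the convention under which Lemma~\ref{Lemma: Meromorphic classification} is formulated in \cite{W09a}. So your proof goes through once this step is replaced: do not look for a coordinate change, but check that the pair consisting of the descended real structure and the descended $\psi$ satisfies conditions (i) and (ii) in the sense of the classification lemma.
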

\begin{proof}
First note that by Proposition \ref{Proposition: ASD orbifold}, $M$ is a compact anti-self-dual 4-orbifold with $\chi_{orb}(M) > 0$ whose isometry group contains $F$. Also, by construction, $F$ act effectively on $M$. When we take the quotient of $Q$ with respect to $G_\s$, the image of $\ell$ defines a twistor line in $Z$, which we will denote by $L$. As noted above the action of $T^k_\C$ about $\ell$ is locally free; thus, the action of $F_\C$ about $L$ is locally free. Therefore, we can apply Lemma \ref{Lemma: Meromorphic classification}, once we have found an appropriate affine coordinate on $L$.

We will use the coordinate on $L$ induced from $\ell$; thus, the real structure on $Z$ restricts to $z \mapsto \bar{z}^{-1}$ on $L$. This satisfies part (i) of Lemma \ref{Lemma: Meromorphic classification}. The $T^k_\C$-valued function $\Psi$ defined in Lemma \ref{Lemma: Chi} descends to an $F_\C$-valued holomorphic function on $L - \{\pm z_1,\ldots, \pm z_k\}$ satisfying $\psi(z).z = -z.$ By equation (\ref{Equation: Quotient map}) for the quotient map from $T^k$ to $F$, $$\psi(z) = \Big(\frac{z+z_i}{z-z_i}\Big)^{v_i}.$$ This satisfies part (ii) of Lemma \ref{Lemma: Meromorphic classification}.

By construction, $\{z_1, \ldots, z_k\}$ satisfies part (iii) of Lemma \ref{Lemma: Meromorphic classification} and there is a conformal isometry of the unit disc mapping $\{z_1^2, \ldots, z_k^2\}$ to $\mc{R}$. Therefore, $M$ is conformally equivalent to $\mrs$.
\end{proof}

\subsection{The proof of Theorem B}\label{Subsection: Theorem B}
We will denote by $Y$ the complement of the $F$-fixed point $x_1$ in $\mrs$. In the quaternionic quotient construction $x_1$ corresponds to $$\{[q_1:\ldots:q_k]\in \HP^{k-1}: q_1 = 0\}.$$ Thus, $Y$ can be constructed by taking the quaternionic quotient of $\{q_1\neq 0\} \subset \HP^{k-1}$ with respect to $G_\s$ and $\mu_\mc{R}$. As explained in Subsection \ref{Subsection: Twistor function definition}, the scalar-flat K\"ahler structures on $Y$ that are invariant under the action of $F$ correspond to $T^k$-invariant twistor functions that are non-vanishing where $q_1\neq 0$. The space of all $T^k$-invariant twistor functions on $\HP^{k-1}$ is spanned by $\{\nu_1,\ldots,\nu_k \}$, where the $\nu_i: \U \rightarrow \tn{Im}\h$ were defined in equation (\ref{Equation: Twistors}). The only twistor function in this space that is non-vanishing away from $\{q_1 = 0\}$ is $\nu_1$. Since $\nu_1$ is not in the span of the components of $\mu_\mc{R}$ for any choice of $\mc{R}$, Corollary \ref{Corollary: Quaternionic-complex quotient} can be applied to find a unique $F$-invariant scalar-flat K\"ahler representative of the conformal structure on $Y$.

So let $X$ be a scalar-flat K\"ahler 4-orbifold whose isometry group contains a 2-torus, which is ALE to order $l > 3/2$. Theorem B in \cite{W09a} states that there is an $F$-equivariant isometry between $X$ and the scalar-flat K\"ahler representative in $Y$, for some $\mc{R}$ and $\s$. It follows that $X$ is isometric to the quaternionic-complex quotient of $(\{q_1 = 0\}, \nu_1)$ with respect to $G_\s$ and $\mu_\mc{R}$. This completes the proof of Theorem B.

\begin{TR}
To construct ALE hyperK\"ahler metrics on the minimal resolution of $\C^2/\Gamma$, for $\Gamma \subset SU(2)$, Kronheimer uses the irreducible representations of $\Gamma$ on $\C^2$ to define a quaternionic vector space and group action, with respect to which the hyperK\"ahler quotient can be taken \cite{K89a}. While the hyperK\"ahler quotient provides an exhaustive list of ALE hyperK\"ahler 4-manifolds \cite{K89b}, the same could not be achieved using the quaternionic quotient, since there are examples of ALE scalar-flat K\"ahler 4-manifolds that do not compactify to anti-self-dual orbifolds with Moishezon twistor spaces \cite{LP92}. Despite this, we would expect that more examples of ALE scalar-flat K\"ahler metrics on resolutions of $\C^2/\Gamma$, for $\Gamma\subset U(2)$, could be constructed as quaternionic-complex quotients of quaternionic vector spaces. It may be possible to do this using a generalization of Kronheimer's representation theoretic approach or alternatively by using Joyce's procedure for gluing copies of the weighted projective plane using the quaternionic quotient \cite{J92}.
\end{TR}

\subsection{Non-toric deformations}\label{Subsection: Non-toric deformations}
We conclude with a remark about deformations that arise within the framework of the quaternionic quotient. The conformal structures that we have constructed so far are toric, since the moment maps are invariant under the action of $T^k$. In general, for $\s$ convex, the space of $T^k$-invariant twistor functions coincides with the space of $G_\s$-invariant twistor functions and so, the family $\mrs$ provides all the quaternionic quotients of $\HP^{k-1}$ by $G_\s$. However, in some special cases there are $G_\s$-invariant twistor functions that are not $T^k$-invariant. We will refer to the convex data $\s$ as \emph{deformable} if the space of $G_\s$-invariant twistor functions is larger than the space of $T^k$-invariant twistor functions on $\HP^{k-1}$. In \cite{J92} Joyce determines which $\s$ are deformable using some elementary linear algebra.

When $\s$ is deformable, the additional twistor functions can be included in the components of the moment map for the action of $G_\s$, and the moment map will still satisfy the transversality condition. Consequently, this gives a larger family of anti-self-dual structures on the underlying orbifold, more precisely when $k > 4$ the $(k-3)$-dimensional torus invariant family is contained in a $3(k-4)$-dimensional family of anti-self-dual structures. It can be easily verified that LeBrun's anti-self-dual metrics on $(k-2)\overline{\cp}^2$ are the only \emph{smooth} examples that can be deformed in this way.

Also note that non-toric ALE scalar-flat K\"ahler metric on the complement of a point in $\ms$ can be constructed in the manner described above, using a deformable $\s$. In particular: the non-toric examples of Kronheimer's construction of hyperK\"ahler metrics on the minimal resolution of a type-A singularity \cite{K89a}, and LeBrun's family of scalar-flat K\"ahler metrics on $\C^2$ blown-up at $k-2$ points \cite{L91}.


\begin{thebibliography}{99}
\bibitem{AHS78} M. F. Atiyah, N. J. Hitchin and I. M. Singer, \emph{Self-duality in four dimensional Riemannian geometry}, Proc. Roy. Soc. London Ser. A \textbf{362} (1978)  425-461.
\bibitem{B88} C.P. Boyer, \emph{A note on hyper-Hermitian four-manifolds}, Proc. Amer. Math. Soc. \textbf{102} (1988) 157--164.
\bibitem{BGMR98} C.P. Boyer, K. Galicki, B.M. Mann and E.G. Rees, \emph{Compact 3-Sasakian 7-manifolds with arbitrary second Betti numbers}, Invent. Math. \textbf{131} (1998) 321-344.
\bibitem{CS04} D.M.J. Calderbank and M.A. Singer, \emph{Einstein metrics and complex singularities,} Invent. Math. \textbf{156} 405-443 (2004).
\bibitem{CS06} D.M. Calderbank and M.A. Singer, \emph{Toric self-dual Einstein metrics on compact orbifolds}, Duke J. Math. \textbf{133} (2006) 237-258.
\bibitem{CLW07} X. Chen, C. LeBrun and B. Weber, \emph{On conformally K\"ahler, Einstein manifolds,} J. Amer. Math. Soc. \textbf{21} (2008) 1137-1168.
\bibitem{F00} A. Fujiki, \emph{Compact self-dual manifolds with torus actions,} J. Diff. Geom. \textbf{55} 229-324 (2000).
\bibitem{GL88} K. Galicki and B.H. Lawson, \emph{Quaternionic reduction and quaternionic orbifolds}, Math. Ann. \textbf{282} (1988) 1-21.
\bibitem{HS91} A. Haefliger and E. Salem, \emph{Actions of tori on orbifolds,} Ann. Global Anal. Geom. \textbf{9} 37-59 (1991).
\bibitem{HKLR87} N.J. Hitchin, A. Karlhede, U. Lindstr\"om and M. Ro\u{c}ek, \emph{Hyperk\"ahler manifolds and supersymmetry}, Comm. Math. Phys. \textbf{108} (1987) 535-589.
\bibitem{J91} D. Joyce, \emph{The hypercomplex quotient and the quaternionic quotient}, Math. Ann. \textbf{290} (1991) 323-340.
\bibitem{J92} D. Joyce, PhD Thesis.
\bibitem{J95} D. Joyce, \emph{Explicit construction of self-dual 4-manifolds}, Duke J. Math. \textbf{77} (1995) 519-552.
\bibitem{K89a} P.B. Kromheimer, \emph{The construction of ALE spaces as hyper-K\"ahler quotients}, J. Diff. Geom. \textbf{29} (1989) 665-683.
\bibitem{K89b} P.B. Kronheimer, \emph{A Torelli-type theorem for gravitational instantons}, J. Diff. Geom. \textbf{29} (1989) 685-697.
    (1993) 273-313.
\bibitem{L91} C. LeBrun, \emph{Explicit self-dual metrics on $\cp^2\#\ldots\#\cp^2$,} J. Diff. Geom. \textbf{34}  (1991)  223-253.
\bibitem{LP92} C. LeBrun and Y.S. Poon, \emph{Twistors, K\"ahler manifolds, and bimeromorphic geometry. II}, J. Amer. Math. Soc. \textbf{5} (1992) 317-325.
\bibitem{P92} M. Pontecorvo, \emph{On the twistor spaces of anti-self-dual Hermitian surfaces}, Trans. Amer. Math. Soc. \textbf{331} (1992) 653-661.
\bibitem{RS05} Y. Rollin and M. Singer, \emph{Non minimal scalar-flat K\"ahler surfaces and parabolic stability}, Invent. Math. \textbf{162} 235-270 (2005).
\bibitem{S82} S.M. Salamon, \emph{Quaternionic K\"ahler manifolds}, Invent. Math. \textbf{67} (1982) 143-171.
\bibitem{S86} S.M. Salamon, \emph{Differential geometry of quaternionic manifolds}, Ann. Sci. \'{E}c. Norm. Sup\'{e}r. IV S\'{e}r. \textbf{19} (1986) 31-55.
\bibitem{S57} I. Satake, \emph{The Gauss-Bonnet theorem for V-manifolds}, Journ. Math. Soc. Japan \textbf{9} (1957) 464-492.
\bibitem{S90} A. Swann, \emph{HyperK\"ahler and quaternionic K\"ahler geometry}, Math. Ann. \textbf{289} (1990) 421-450.
\bibitem{W09a} D. Wright, \emph{Compact anti-self-dual orbifolds with torus actions}, (2008) arXiv:0805.2361v3.
\end{thebibliography}
\end{document}